\theoremstyle{plain}
\begin{document}
\title{Heisenberg algebra, wedges and crystals}
\author{Thomas Gerber}


%


\begin{abstract}
We explain how the action of the Heisenberg algebra on the space of $q$-deformed wedges
yields the Heisenberg crystal structure on charged multipartitions, by
using the boson-fermion correspondence and looking at the action of the Schur functions at $q=0$.
In addition, we give the explicit formula for computing this crystal in full generality.
\end{abstract}

\maketitle


\markright{HEISENBERG ALGEBRA, WEDGES AND CRYSTALS}

\section{Introduction}

Categorification of representations of affine quantum groups has proved to be an important tool
for understanding many classic objects arising from modular group representation theory, among which
Hecke algebras and rational Cherednik algebras of cyclotomic type, and
finite classical groups.
More precisely, the study of crystals and canonical bases of the level $\ell$ Fock space representations $\cF_\bs$ of $\Ue$
gives answers to several classical problems in combinatorial terms.
In particular, we know that the $\Ue$-crystal graph of $\cF_\bs$ can be categorified in the following ways:
\begin{itemize}
\item by the parabolic branching rule for modular cyclotomic Hecke algebras \cite{Ariki1996},
when restricting to the connected component containing the empty $\ell$-partition,
\item by Bezrukavnikov and Etingof's parabolic branching rule for cyclotomic rational Cherednik algebras \cite{Shan2011},
\item by the weak Harish-Chandra modular branching rule on unipotent representations for finite unitary groups \cite{GerberHissJacon2015}, \cite{DudasVaragnoloVasserot2015},
for $\ell=2$ and $\bs$ varying.
\end{itemize}
In each case, the branching rule depends on some parameters that are explicitly determined by the parameters 
$e$ and $\bs$ of the Fock space.

Recently, there has been some important developments when Shan and Vasserot \cite{ShanVasserot2012}
categorified the action of the Heisenberg algebra on a certain direct sum of Fock spaces,
in order to prove a conjecture by Etingof \cite{Etingof2012}.
Losev gave in \cite{Losev2015} a formulation of Shan and Vasserot's results in terms of crystals,
as well as an explicit formula for computing it in a particular case.
Independently and using different methods, the author defined a notion of Heisenberg crystal for 
higher level Fock spaces \cite{Gerber2016}, that turns out to coincide with Losev's crystal.
An explicit formula was also given in another particular case, using level-rank duality.
Like the $\Ue$-crystal, the Heisenberg crystal gives some information at the categorical level.
In particular, it yields a characterisation of
\begin{itemize}
 \item the finite dimensional irreducible modules in the cyclotomic Cherednik category $\cO$ by \cite{ShanVasserot2012} and \cite{Gerber2016},
 \item the usual cuspidal irreducible unipotent modular representations of finite unitary groups \cite{DudasVaragnoloVasserot2016}.
\end{itemize}

This paper solves two remaining problems about the Heisenberg crystal.
Firstly, even though it originally arises from the study of cyclotomic rational Cherednik algebras
(it is determined by combinatorial versions of certain adjoint functors defined on the bounded derived category $\cO$),
the Heisenberg crystal has an intrinsic existence as shown in \cite{Gerber2016}.
Therefore, it is natural to ask for an algebraic construction of the Heisenberg crystal
which would be independent of any categorical interpretation.
This is achieved via the boson-fermion correspondence and the use of the Schur functions,
acting on Uglov's canonical basis of $\cF_\bs$.
This gives a new realisation of the Heisenberg crystal, analogous to Kashiwara's crystals
for quantum group.
Secondly, we give an explicit formula for computing the Heisenberg crystal in full generality.
This generalises and completes the particular results of \cite{Losev2015} and \cite{Gerber2016}.
This is done in the spirit of \cite{FLOTW1999} where formulas for the $\Ue$-crystal were explicited.

\medskip

The present paper has the following structure.
In Section 2, we start by introducing in detail several combinatorial objects
indexing the basis of the wedge space
(namely charged multipartitions, abaci and wedges)
and the different ways to identify them.
Then, we quickly recall some essential facts about the $\Ue$-structure of the 
Fock spaces $\cF_\bs$.
Section 3 focuses on the Heisenberg action on the wedge space $\cF_s$, seen as
a certain direct sum of level $\ell$ Fock spaces $\cF_\bs$.
In particular, we recall the definition of the Heisenberg crystal given in \cite{Gerber2016}.
Then, we give in Section 4 a solution to the first problem mentioned above.
More precisely, we recall the quantum boson-fermion correspondence and
fundamental facts about symmetric functions. 
Inspired by Shan and Vasserot \cite{ShanVasserot2012} and Leclerc and Thibon \cite{LeclercThibon2001},
we study the action of the Schur functions on the wedge space and use a result of Iijima \cite{Iijima2012}
to construct the Heisenberg crystal as a version of this action at $q=0$ (Theorem \ref{thmschur}),
resembling Kashiwara's philosophy of crystal and canonical bases.
Most importantly, by doing so, we bypass entirely Shan and Vasserot's original categorical construction.
Section 5 is devoted to the explicit computation of the Heisenberg crystal.
We introduce level $\ell$ vertical $e$-strips,
as well as the notion of good 
vertical $e$-strips by defining
an appropriate order.
The action of the Heisenberg crystal operators is then given in terms of
adding
good level $\ell$ vertical $e$-strips (Theorem \ref{thmheis}), which
is reminiscent of the explicit formula for the Kashiwara crystal
operators first given in \cite{JMMO1991} (see also \cite{FLOTW1999}).
We relate this result to other combinatorial procedures in the literature
answering in particular a question of Tingley \cite{Tingley2008}.
In addition, we give several examples of explicit computations.
Finally, we recall some useful facts about level-rank duality in appendix,
enabling the definition of the Heisenberg crystal.

\section{Higher level Fock spaces}\label{fockspace}

\subsection{Charged multipartitions and wedges} \label{mpwedge}

\subsubsection{Charged multipartitions}\label{multipart}

Fix once and for all elements $e,\ell\in\Z_{\geq 2}$ and $s\in\Z$.
An \textit{$\ell$-partition} (or simply \textit{multipartition}) is an $\ell$-tuple of partitions $\bla=(\la^1,\dots,\la^\ell)$. These will be represented using Young diagrams.
Denote $\Pi_\ell$ the set of $\ell$-partitions and $\Pi=\Pi_1$ the set of partitions.
Partitions will sometimes be denoted multiplicatively for convenience, e.g. $(2,1,1)=(2.1^2)={\ytableausetup{centertableaux, boxsize=2mm}\ydiagram{2,1,1}}\,$.

An \textit{$\ell$-charge} (or simply \textit{(multi)charge}) is an $\ell$-tuple of integers $\bs=(s_1,\dots,s_\ell)$.
We write $|\bs|=\sum_{j=1}^\ell s_j$.
We call \textit{charged $\ell$-partition} the data consisting of an $\ell$-partition $\bla$ and an $\ell$-charge $\bs$,
and denote it by $|\bla,\bs\rangle$.

For a box $\ga=(a,b,j)$ in the Young diagram of $\bla$ (where $a\in\Z$ is the row index of the box, $b\in\Z$ the column index and $j\in\{1,\dots,\ell\}$ the coordinate),
let $\fc(\ga)=b-a+s_j$, the \textit{content} of $\ga$.
We will represent $|\bla,\bs\rangle$ by filling the boxes of the Young diagram of $\bla$ with their contents.

\begin{exa}\label{exachargedmp}
Take $\ell=2$, $\bs=(-1,2)$ and $\bla=(2.1,1^2)$.
We have $s=-1+2=1$ and 
$$
|\bla,\bs\rangle = \left(\; \tiny \young(\moinsun0,\moinsdeux)\; , \; \young(2,1) \;\right).
$$
\end{exa}

In the following, we will only consider multicharges $\bs$ verifying $|\bs|:=s_1+\dots+s_\ell =s$.

\medskip

For a partition $\la$, let $\la'$ denote its conjugate, that is the partition obtained by swapping rows and columns in the Young diagram of $\la$.
We extend this to charged multipartitions by setting
$|\bla,\bs\rangle' = |\bla',\bs'\rangle$ where $\bla' = ((\la^\ell)',\dots,(\la^1)')$ and $\bs'=(-s_\ell,\dots,-s_1)$.

\subsubsection{Abaci representation} \label{abaci}

A charged $\ell$-partition $|\bla,\bs\rangle$ can also be represented by 
the $\Z$-graded $\ell$-abacus
$$\cA(\bla,\bs) = \left\{ (j, \la^j_k+s_j-k+1 ) \, ; \, j\in\{1,\dots,\ell\}, k\in\Z_{>1} \right\}$$
where $\la^j_k$ denotes the $k$-th part of $\la^j$.
In the rest of this paper, we will sometimes identify $|\bla,\bs\rangle$ with $\cA(\bla,\bs)$.

\begin{exa}\label{exaabacus}
With $|\bla,\bs\rangle$ as in Example \ref{exachargedmp},
we get the following corresponding abacus
\begin{equation*}
\cA(\bla,\bs)=\left\{
\begin{array}{clc} 
& \dots, (2,-5), (2,-4), (2,-3), (2,-2), (2,-1), (2,0) , (2,2), (2,3), & \\
& \dots, (1,-5), (1,-4), (1,-3), (1,-1), (1,1) &
\end{array} 
\right\}
\end{equation*}
which we picture as follows, by putting a (black) bead at position $(j,\la^j_k+s_j-k+1)$ where $k\in\Z_{>1}$
and $j\in\{1,\dots,\ell\}$ is the row index (numbered from bottom to top):
\begin{figure}[H]  \centering
\includegraphics[scale=1.2]{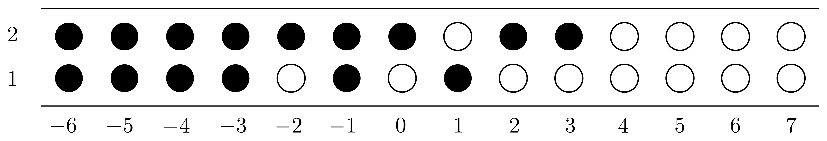}
\label{ab2}
\end{figure}
\end{exa}

Note that the conjugate a multipartition can be easily described on the abacus:
it suffices to rotate it by 180 degrees around the point of coordinates $(\frac{1}{2},\frac{\ell}{2})$
and swap the roles of the white and black beads.

\medskip

Using the abaci realisation of charged multipartitions, we define below a bijection
$$
\begin{array}{cccc}
\tau :
& \left\{|\la,s\rangle \,;\, \la\in\Pi_1\right\}
& \overset{\sim}{\lra}
& \left\{|\bla,\bs\rangle \,;\, \bla\in\Pi_\ell,|\bs|=s \right\}
\\
&
\cA(\la,s) 
& \longmapsto
& \cA(\bla,\bs),
\end{array}
$$
which can be seen as a twisted version of taking the $\ell$-quotient and the $\ell$-core of a partition, see \cite[Chapter 1]{Yvonne2005}.
However, unlike the usual $\ell$-quotient and $\ell$-core, $\tau$ and $\tau^{-1}$ will depend not only on $\ell$, but also on $e$.

\medskip

Writing down the Euclidean division first by $e\ell$ and then by $e$,
one can decompose any $n\in\Z$ as
$n = -z(n) e\ell + (y(n)-1)e + (x(n)-1)$ with $z(n)\in\Z$, $1\leq y(n) \leq \ell$ and $1\leq x(n)\leq e$.
We can then associate to each pair $(1,c)\in\{1\}\times\Z$ the pair $\tau(1,d)=(j,d)\in\{1,\dots,\ell\}\times\Z$
where
$$j=y(-c)\mand d=-(x(-c)-1)+ez(-c).$$

In particular, $\tau$ sends the bead in position $(1,c)$ into
the rectangle $z(-c)$, on the row $y(-c)$ and column $x(-c)$ (numbered from right to left within each rectangle).

The map $\tau$ is bijective and we can see $\tau^{-1}$ as the following procedure:
\begin{enumerate}
 \item Divide the $\ell$-abacus into rectangles of size $e\times\ell$, where the $z$-th rectangle ($z\in\Z$) contains the positions
 $(j,d)$ for all $1\leq j\leq \ell$ and $-e+1+ze\leq d \leq ze$.
 \item Relabel each $(j,d)$ by the second coordinate of $\tau^{-1}(j,d)$, see Figure \ref{ab9} for an example.
 \item Replace the newly indexed beads on a $1$-abacus according to this new labeling.
\end{enumerate}

\begin{figure}[H]  \centering
\includegraphics[scale=1.2]{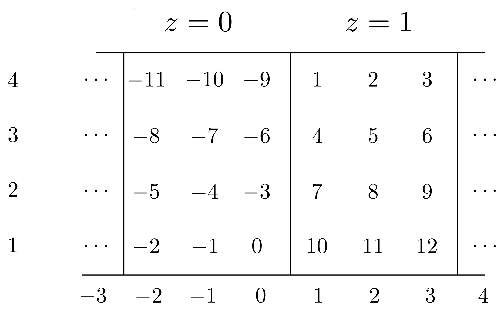}
\caption{Relabelling bead positions in the $\ell$-abacus according to $\tau^{-1}$, for $\ell=4$ and $e=3$.}
\label{ab9}
\end{figure}

Explicitly, $\tau$ and $\tau^{-1}$ verify the following formulas: 
$$ \arraycolsep=1.4pt\def\arraystretch{2.2}
\begin{array}{rcl}
\tau (1,c) & = & \left( \, -\left\lfloor\frac{-c}{e}\right\rfloor \mod \ell +1 \, , \,  (-c\mod e) + e \left\lfloor\frac{-c}{e\ell}\right\rfloor \, \right) \\
& = & \left( \frac{-c-(-c\mod e)}{e}\mod \ell +1 \, , \,  (-c\mod e) - e\left( \frac{-c-(-c\mod e\ell)}{e\ell}\right)\right)
\end{array}
$$
and
$$ \arraycolsep=1.4pt\def\arraystretch{2.2}
\begin{array}{rcl}
\tau^{-1} (j,d) & = & \left( 1 \, , \, -(-d\mod e) - e(j-1) + e\ell  \left\lfloor\frac{-d}{e}\right\rfloor \right) \\
& = & \left( 1 \, , \, e(1-j) +\ell d - (\ell+1) (-d\mod e)\right).
\end{array}
$$

\begin{exa} We go back to Example \ref{exaabacus}.
The action of $\tau$ and $\tau^{-1}$ are represented below.
\begin{figure}[H]  \centering
\includegraphics[scale=1.2]{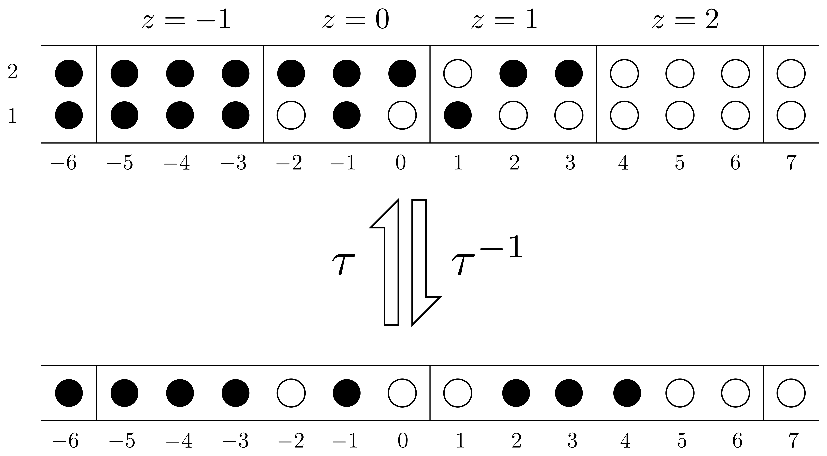}
\label{ab4}
\end{figure}
\end{exa}

\subsubsection{Wedges}\label{wedge}

Let $P(s)$ denote the set of sequences of integers 
$\bal=(\al_1,\al_2,\dots)$ such that $\al_k=s-k+1$ for $k$ sufficiently large,
and set $$P_{>}(s) = \left\{ (\al_1,\al_2,\dots)\in P(s) \, | \, \al_k>\al_{k+1} \text{ for all } k\in\Z_{>1} \right\}.$$

\begin{defi}\label{defiwp}
An \textit{elementary wedge} (respectively \textit{ordered wedge})
is a formal element 
$u_\bal = u_{\al_1} \w u_{\al_2}\w\dots$ where $\bal\in P(s)$
(respectively $\bal\in P_{>}(s)$). 
\end{defi}

\medskip

Let $|\bla,\bs\rangle$ be a charged $\ell$-partition with $|\bs|=s$,
and let $\la=(\la_1,\la_2,\dots)$ be the partition such that $\tau(|\la,s\rangle)=|\bla,\bs\rangle$.
Set $\bbe=(\be_1,\be_2,\dots)$ where $\be_k = \la_k+s-k+1$ for all $k\in\Z_{>1}$.
In other terms, $\bbe$ is the sequence of integers indexing the beads of $\cA(\la,s)$.

We clearly have $\bbe\in P_{>}(s)$, so we can consider the elementary wedge $u_\bbe$.
In fact, we will also identify $|\bla,\bs\rangle$ with $u_\bbe$.

\medskip

To sum up, we will allow the following identifications:
$$|\bla,\bs\rangle \llra \cA(\bla,\bs) \overset{\tau}{\llra} \cA (\la,s) \llra u_\bbe \llra |\la,s\rangle.$$

We will denote
$$B_\bs = \left\{ |\bla,\bs\rangle \, ; \, \bla\in\Pi_\ell \right\} \text{ for an $\ell$-charge $\bs$}$$
and
$$B_s = \left\{ |\la,s\rangle \, ; \, \la\in\Pi \right\} = \bigsqcup_{|\bs|=s} B_\bs.$$

\subsection{Fock space as $\Ue$-module}\label{uemodule}

Let $q$ be an indeterminate.

\subsubsection{The JMMO Fock space}\label{jmmofockspace}

Fix an $\ell$-charge $\bs$.
The \textit{Fock space} associated to $\bs$ is the $\Q(q)$-vector space $\cF_\bs$ 
with basis $B_\bs$.

\begin{thm}[Jimbo-Misra-Miwa-Okado \cite{JMMO1991}] \label{thmjmmo}
The space $\cF_\bs$ is an integrable $\Ue$-module of level $\ell$.
\end{thm}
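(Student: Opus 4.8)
The plan is to exhibit explicit operators on $\cF_\bs$ realizing the Chevalley generators $e_i, f_i$ and the Cartan part of $\Ue$ (for $i\in\Z/e\Z$), and then to verify the defining relations of the quantum affine algebra together with the integrability conditions (local nilpotence of the $e_i, f_i$, and weight-space decomposition with integral weights). Concretely, I would define $f_i$ as a signed, $q$-weighted sum over all ways of adding an addable box of content $\equiv i \pmod e$ to $|\bla,\bs\rangle$, and $e_i$ dually as a sum over removable $i$-boxes; the power of $q$ attached to each term records the number of addable/removable $i$-boxes lying ``above'' or ``below'' the box being changed, in the order dictated by the charge $\bs$. This is exactly the JMMO formula, so the substance is in checking it is well defined (the sums are finite because a multipartition has only finitely many addable/removable boxes) and that it lands in $\cF_\bs$.

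The key steps, in order: (1) Set up the content function and the notion of $i$-nodes using $\fc(\ga)=b-a+s_j$ from Section~\ref{multipart}, and partition the boxes one can add/remove by residue mod $e$. (2) Write down the action of $f_i$ and $e_i$ with the correct $q$-powers, and define the action of the Cartan generators $K_i$ and the grading element $D$ so that $|\bla,\bs\rangle$ is a weight vector; read off that the weight of the empty multipartition is $\Lambda_{s_1}+\dots+\Lambda_{s_\ell}$ (mod $e$), a level-$\ell$ dominant integral weight, and that adding an $i$-box lowers the weight by $\alpha_i$. (3) Verify the Serre relations and the commutation relations $[e_i,f_j]=\delta_{ij}\frac{K_i-K_i^{-1}}{q-q^{-1}}$; this is the computational heart and is done by a careful bookkeeping of the $q$-powers when two boxes of residues $i$ and $j$ interact, distinguishing the cases $i=j$, $i=j\pm1$, and $|i-j|>1$. (4) Check integrability: each basis vector is killed by a high enough power of $e_i$ and of $f_i$ (clear, since only finitely many $i$-boxes are addable/removable and the residues available change as one adds boxes), and the weights lie in the integral weight lattice.

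The main obstacle is step~(3): proving the quantum Serre relations and the $[e_i,f_j]$ relation directly from the combinatorial formula requires a genuinely delicate sign-and-$q$-power cancellation argument, organized by looking at the relative position (in the $\bs$-order) of the nodes involved and reducing to rank-one and rank-two sub-configurations. Rather than grinding through this, I would prove the theorem by reduction to the known $q$-deformed wedge space: using the identifications $|\bla,\bs\rangle\leftrightarrow u_\bbe\leftrightarrow|\la,s\rangle$ of Section~\ref{wedge}, transport the standard $\Ue$-action on the space of $q$-wedges $\cF_s$ (the Uglov/KMS construction, where $\Ue$ acts on the finite ``tensor'' levels $u_\alpha$ by the usual Jimbo action and this descends to wedges) through $\tau$, and observe that under $\tau$ this action is block-diagonal with blocks indexed by the multicharges $\bs$ with $|\bs|=s$; identifying the block on $\cF_\bs$ with the JMMO action then gives integrability for free, since integrability of the wedge space module is inherited from that of the defining ``tensor'' space $\bigwedge^\infty$ of the vector representation (local nilpotence and integral weights pass to subquotients and to the wedge). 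I would close by noting the level: the vector representation has level one, and taking the semi-infinite wedge along the $\ell$-periodicity built into $\tau$ multiplies the level by $\ell$, matching the claimed level-$\ell$ statement.
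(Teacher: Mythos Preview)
The paper does not prove Theorem~\ref{thmjmmo}; it is stated as a result of Jimbo--Misra--Miwa--Okado with a citation to \cite{JMMO1991}, and the explicit formulas for the action are deferred to \cite[Section~6.2]{GeckJacon2011}. Your first route (explicit JMMO operators on $|\bla,\bs\rangle$ via addable/removable $i$-boxes, then verification of the quantum affine relations and integrability) is exactly the content of the cited reference, so in that sense your proposal matches the paper's ``proof by citation''.

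Your second route, however, is problematic \emph{within this paper's logical structure}. You propose to deduce the $\Ue$-module structure on $\cF_\bs$ by transporting Uglov's action on the wedge space $\cF_s$ through $\tau$ and projecting onto the block indexed by $\bs$. But look at the proof of Theorem~\ref{thmwedgeuemod}: the paper establishes the $\Ue$-structure on $\cF_s$ precisely by invoking Theorem~\ref{thmjmmo} on each summand $\cF_\bs$. So in this paper the implication runs the other way, and your alternative argument would be circular here. (In the broader literature this is not circular---Uglov builds the $\Ue$-action on $\cF_s$ independently via the affinised tensor space, and one can then recover the JMMO action on each $\cF_\bs$---but that is not how the present paper is organised.) If you want to keep the wedge-space reduction, you would need to replace the appeal to Theorem~\ref{thmjmmo} in the proof of Theorem~\ref{thmwedgeuemod} by a direct reference to Uglov's construction of the $\Ue$-action on wedges, and then the level-$\ell$ claim follows from the central charge computation in \cite{Uglov1999} rather than from your heuristic about ``multiplying the level by $\ell$'', which as stated is too vague to stand on its own.
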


The action of the generators of $\Ue$ depends on $\bs$ and $e$, and
is given explicitly in terms of addable/removable boxes, see e.g. \cite[Section 6.2]{GeckJacon2011}.
In turn, this induces a $\Ue$-crystal structure (also called Kashiwara crystal) \cite{Kashiwara1993}, usually encoded in a graph $\sB_\bs$, 
whose vertices are the elements of $B_\bs$, 
and with colored arrows representing the action of the Kashiwara operators.
An explicit (recursive) formula for computing this graph was first given in \cite{JMMO1991}: two vertices
are joined by an arrow if and only if one is obtained from the other by adding/removing a \textit{good} box, see \cite[Section 6.2]{GeckJacon2011}
for details.

\begin{figure}[H] \centering 
\begin{tikzpicture}[scale=0.7]

\node (a) at (5,6) {%
${ \tiny  \text{-} \, \text{-}  }$
};

\node (b1) at (2.5,4) {%
{${ \tiny \young(0)  \, \text{-}  }$}
}; 
\node (b2) at (7.5,4) {%
{${ \tiny \text{-}  \, \young(1)  }$}
};

\node (c1) at (1,2) {%
{${ \tiny \young(01)  \, \text{-}  }$}
}; 
\node (c2) at (4,2) {%
{${ \tiny \young(0,\moinsun) \, \text{-} }$}
}; 
\node(c3) at (6,2) {%
{${ \tiny \text{-} \, \young(12) }$}
}; 
\node (c4) at (9,2) {%
{${ \tiny \young(0)  \, \young(1)  }$}
}; 
\node(c5) at (12,2) {%
{${ \tiny \text{-} \, \young(1,0) }$}
};

\node(d1) at (0,0) {%
{${ \tiny \young(01) \, \young(1)  }$}
};
\node(d2) at (2,0) {%
{${ \tiny \young(012) \, \text{-} }$}
};
\node(d3) at (4,0) {%
{${ \tiny \young(01,\moinsun)  \, \text{-}  }$}
};
\node(d4) at (6,0) {%
{${ \tiny \text{-} \, \young(123)  }$}
};
\node(d5) at (8,0) {%
{${ \tiny \young(0)  \, \young(1,0)  }$}
};
\node(d6) at (10,0) {%
{${ \tiny \young(0) \, \young(12)  }$}
};

\node(d7) at (12,0) {%
{${ \tiny \text{-} \, \young(12,0) }$}
};
\node(d8) at (14,0) {%
{${ \tiny \young(0,\moinsun,\moinsdeux) \, \text{-}  }$}
};
\node(d9) at (16,0) {%
{${ \tiny \text{-} \, \young(1,0,\moinsun) }$}
};

\draw[->] (a) -- node [font=\tiny, left] {} (b1) ;
\draw[->] (a) -- node [font=\tiny, above] {} (b2) ;
\draw[->] (b1) -- node [font=\tiny,left] {} (c1) ;
\draw[->] (b1) -- node [font=\tiny, above] {} (c2) ;
\draw[->] (b2) -- node [font=\tiny, above] {} (c3) ;
\draw[->] (b2) -- node [font=\tiny, above] {} (c4) ;
\draw[->] (c5) -- node [font=\tiny, above] {} (d7) ;

\draw[->] (c1) -- node [font=\tiny, above] {} (d1) ;
\draw[->] (c1) -- node [font=\tiny, above] {} (d2) ;
\draw[->] (c2) -- node [font=\tiny, above] {} (d3) ;
\draw[->] (c3) -- node [font=\tiny, above] {} (d4) ;
\draw[->] (c4) -- node [font=\tiny, above] {} (d5) ;
\draw[->] (c4) -- node [font=\tiny, above] {} (d6) ;
\end{tikzpicture}

\caption{The beginning of the Kashiwara crystal graph $\sB_\bs$ of $\cF_\bs$ for $\ell=2$, $e=3$ and $\bs=(0,1)$}.
\label{uecrystal}
\end{figure}

It has several connected components, each of which parametrised by its
only source vertex, or \textit{highest weight vertex}.
The decomposition of this graph in connected components reflects the decomposition 
of $\cF_\bs$ into irreducible representations 
(which exists because $\cF_\bs$ is integrable according to Theorem \ref{thmjmmo}).

\subsubsection{Uglov's wedge space}\label{uglovfockspace}

Denote $\cF_s$ the $\Q(q)$-vector space spanned by the elementary wedges, and subject to the straightening relations
defined in \cite[Proposition 3.16]{Uglov1999}.
This is called the \textit{space of semi-infinite $q$-wedges}, or simply the \textit{wedge space},
and the elements of $\cF_s$ will be called \textit{wedges}.

Using the straightening relations, a $q$-wedge can be expressed as a $\Q(q)$-linear combination of 
ordered wedges. In fact, one can show (see \cite[Proposition 4.1]{Uglov1999}) that 
the set of ordered wedges $B_s$ forms a basis of $\cF_s$.

\begin{rem}\
\begin{enumerate}
\item The terminology ``wedge space'' is justified by the original construction of this vector space in the level one case by
Kashiwara, Miwa and Stern \cite{KashiwaraMiwaStern1995}. 
In this context, one can first construct a quantum version of the usual $k$-fold
wedge product (or exterior power) $\La^k V$, where $V$ is the natural $\Ue$-representation 
(an affinisation of $\C^e\otimes\C^\ell$).
The space $\cF_s$ is then defined as the projective limit (taking $k\ra\infty$) of $\La^kV$.
In the higher level case, the analogous construction was achieved by Uglov \cite{Uglov1999}.
\item Using the bijection between ordered wedges and partitions charged by $s$, one can
see $\cF_s$ as a level $1$ Fock space, whence the notation.
In fact, it is sometimes called the \textit{fermionic} Fock space.
\end{enumerate}
\end{rem}

\begin{thm}\label{thmwedgeuemod}
The wedge space $\cF_s$ is an integrable $\Ue$-module of level $\ell$.
\end{thm}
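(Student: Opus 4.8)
The plan is to transport the $\Ue$-module structure of Theorem \ref{thmjmmo} along the identification of bases furnished by the bijection $\tau$. Concretely, the set of ordered wedges $B_s$ is a basis of $\cF_s$ (by \cite[Proposition 4.1]{Uglov1999}), and via the chain of identifications
$$u_\bbe \llra |\la,s\rangle \overset{\tau}{\llra} |\bla,\bs\rangle \in \bigsqcup_{|\bs|=s} B_\bs,$$
we get a $\Q(q)$-linear isomorphism $\cF_s \overset{\sim}{\lra} \bigoplus_{|\bs|=s} \cF_\bs$. First I would invoke Theorem \ref{thmjmmo} to equip each summand $\cF_\bs$ (for every $\ell$-charge $\bs$ with $|\bs|=s$) with its integrable $\Ue$-action of level $\ell$, and then take the direct sum of these actions. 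Transporting this back through the isomorphism defines a $\Ue$-action on $\cF_s$. Since a direct sum of integrable level $\ell$ modules is again integrable of level $\ell$, this gives the statement — provided one checks that the resulting action is the ``correct'' one, i.e.\ the one coming intrinsically from Uglov's wedge construction and its straightening relations.

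The key steps, in order, are: (1) recall that $B_s$ is a basis and that $\tau$ identifies it with $\bigsqcup_{|\bs|=s} B_\bs$, hence fix the vector-space isomorphism $\Phi\colon \cF_s \overset{\sim}{\lra} \bigoplus_{|\bs|=s}\cF_\bs$; (2) define the action of the Chevalley generators $e_i, f_i, t_i$ (and the divided powers, for integrability) on $\cF_s$ as the pullback under $\Phi$ of the direct sum of the JMMO actions; (3) verify this is well defined on all of $\cF_s$, i.e.\ compatible with Uglov's straightening relations of \cite[Proposition 3.16]{Uglov1999}; (4) conclude integrability and level $\ell$ from the corresponding properties of each $\cF_\bs$, which are finite in number for each fixed total degree so local finiteness is preserved. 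The level is $\ell$ because each $\cF_\bs$ has level $\ell$ and the central element acts by the same scalar on every summand.

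The main obstacle is step (3): one must know that Uglov's $q$-wedge space, with its $\Ue$-action defined via the $R$-matrix on the natural representation $V$ and passage to the semi-infinite limit, actually decomposes under $\tau$ as the asserted direct sum of JMMO Fock spaces, with matching module structures. This is precisely the content of Uglov's comparison theorem (the compatibility of the wedge-space action with the JMMO action on each $\cF_\bs$), so the honest thing is to cite \cite{Uglov1999} — specifically the results identifying the $\Ue$-action on $\cF_s$ with $\bigoplus_{|\bs|=s}\cF_\bs$ — rather than re-derive it. Thus the proof is essentially a bookkeeping argument: the nontrivial input is Uglov's construction, and the statement follows by assembling \cite[Propositions 3.16 and 4.1]{Uglov1999} with Theorem \ref{thmjmmo} and the elementary observation that integrability and level are preserved under the (degreewise finite) direct sum. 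I would also remark that the $q=1$ or classical shadow of this is the familiar statement that a fermionic Fock space on a tensor-product ``colour space'' decomposes into Fock spaces indexed by charge sequences, which motivates why $\tau$ — depending on both $e$ and $\ell$ — is the right combinatorial bijection.
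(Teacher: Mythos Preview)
Your proposal is correct and follows essentially the same route as the paper: identify the ordered-wedge basis $B_s$ with $\bigsqcup_{|\bs|=s} B_\bs$ via $\tau$, obtain the vector-space decomposition $\cF_s=\bigoplus_{|\bs|=s}\cF_\bs$, and then invoke Theorem~\ref{thmjmmo} on each summand. You are in fact more careful than the paper in isolating step~(3) (compatibility of Uglov's intrinsic $\Ue$-action with the JMMO action under $\tau$) as the nontrivial input to be cited from \cite{Uglov1999}; the paper's proof simply asserts this compatibility in one line.
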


\proof
The identification
$B_s=\left\{u_\bbe \in \cF_s \,|\, \be\in P_{>}(s) \right\}
= 
\left\{| \bla,\bs\rangle \in \cF_\bs \, ;\, |\bs|=s \right\}
= \bigsqcup_{|\bs|=s}B_\bs
$
of Section \ref{wedge} yields the $\Q(q)$-vector space decomposition
$$\cF_s = \bigoplus_{|\bs|=s}\cF_\bs.$$
By Theorem \ref{jmmofockspace}, this decomposition still holds as integrable $\Ue$-module.
\endproof

\section{The Heisenberg action}\label{heis}

It turns out that the wedge space has some additional structure, namely that of a $\cH$-module, where
$\cH$ is the (quantum) Heisenberg algebra.
This has been first observed by Uglov \cite{Uglov1999}, generalising some results of Lascoux, Leclerc and Thibon \cite{LLT1997}.
Let us recall the definition of this algebra.

\subsection{The action of the bosons}\label{boson}

Let us start by recalling the definition of the quantum Heisenberg algebra.

\begin{defi}\label{defheis}
The \textit{(quantum) Heisenberg algebra} is the unital $\Q(q)$-algebra $\cH$ with generators $ p_m \, , \, m\in \Z^\times$ and
defining relations 
$$ [p_m,p_{m'}] = \de_{m,-m'} m \frac{1-q^{-2me}}{1-q^{-2m}}\times\frac{1-q^{2m\ell}}{1-q^{2m}}$$
for $m,m'\in\Z^\times$.
The elements $p_m$ are called \textit{bosons}.
\end{defi}

Note that this is a $q$-deformation of the usual Heisenberg algebra with relations $[p_m,p_{m'}]=\de_{m,-m'}m$.

\begin{thm}\label{thmboson}
The formula
$$p_m(u_\bbe) = \sum_{k\geq 1} u_{\be_1} \w \dots \w u_{\be_{k-1}} \w u_{\be_{k-e\ell m}}\w u_{\be_{k+1}}\w \dots \text{\hspace{1cm} for $u_\be\in\cF_s$ and $m\in\Z^\times$}$$
 endows $\cF_s$ with the structure of $\cH$-module.
\end{thm}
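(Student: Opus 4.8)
The statement is Uglov's theorem \cite[Section~4]{Uglov1999}, extending the level-one case of Lascoux--Leclerc--Thibon \cite{LLT1997}, and the plan is to reproduce that line of argument. Two things have to be established: first, that the right-hand side of the displayed formula depends only on the wedge $u_\bbe$ and not on a chosen way of writing it, i.e.\ that the operators $p_m$ are compatible with the straightening relations of \cite[Proposition~3.16]{Uglov1999} and hence descend to $\cF_s$; second, that the family $(p_m)_{m\in\Z^\times}$ satisfies the defining commutation relations of Definition~\ref{defheis}.

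For the first point, the plan is to work with the finite $q$-wedge spaces $\La^N V$ ($N$ large) from which $\cF_s$ is obtained as a projective limit, where every straightening is a finite process and where $p_m$ is given by the analogous formula shifting each tensor factor in turn (and straightening). Since the straightening relations of \cite[Proposition~3.16]{Uglov1999} only involve the reordering of two adjacent factors, checking that $p_m$ preserves the $\Q(q)$-subspace they generate reduces to a computation on $2$-fold wedges $u_a\w u_b$: one applies $p_m$ and straightens on both sides of each relation and compares. The essential feature making this work is that the shift is by $e\ell m$, a multiple of the period $e\ell$ built into the definition of $V$, so that it does not disturb the residues modulo $e$ and modulo $\ell$ that govern the straightening coefficients. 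Compatibility with the projection maps $\La^{N+1}V\to\La^N V$ is then immediate, and passing to the limit gives a well-defined operator $p_m$ on $\cF_s$; the sums are finite on each ordered wedge (for $m>0$ the extra terms produce repeated factors and vanish after straightening, and analogously for $m<0$).

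For the second point, write $p_m=\sum_{k\geq 1}\rho_k^{(m)}$ where $\rho_k^{(m)}$ is the single-site operator modifying the $k$-th factor (followed by straightening), so that $[p_m,p_{m'}]=\sum_{k,k'}[\rho_k^{(m)},\rho_{k'}^{(m')}]$. Single-site operators at distinct, far-apart positions commute on the nose, so the only contributions come from pairs of positions whose images interact under straightening; there one expands using the explicit straightening rules and collects the resulting powers of $q$. When $m\neq -m'$ all these contributions cancel, giving $[p_m,p_{m'}]=0$; when $m=-m'$ the surviving terms form geometric sums that collapse to the scalar $m\,\frac{1-q^{-2me}}{1-q^{-2m}}\cdot\frac{1-q^{2m\ell}}{1-q^{2m}}$, the two factors $\frac{1-q^{-2me}}{1-q^{-2m}}=1+q^{-2m}+\dots+q^{-2m(e-1)}$ and $\frac{1-q^{2m\ell}}{1-q^{2m}}=1+q^{2m}+\dots+q^{2m(\ell-1)}$ accounting respectively for the $e$ and the $\ell$ ``colours'' of $V\cong\C^e\otimes\C^\ell\otimes\C[z^{\pm1}]$. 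I expect the main obstacle to be exactly this last bookkeeping: carrying the $q$-powers produced by iterated straightening through the double sum and verifying that everything cancels except for the claimed rational function. This is precisely the computation carried out by Uglov (and, in level one, by Lascoux--Leclerc--Thibon), which one may either quote directly or redo in the present normalisation.
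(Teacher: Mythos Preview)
The paper itself gives no proof and simply refers to \cite[Propositions~4.4 and~4.5]{Uglov1999}, noting explicitly that the argument ``is done in two distinct steps, the second of which requires the notion of \emph{asymptotic} wedge''. Your first step (well-definedness of $p_m$ via compatibility with the straightening relations, reduced to a $2$-fold wedge check because the shift is a multiple of $e\ell$) is indeed how Uglov establishes Proposition~4.4, so that part matches.

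Your second step, however, does \emph{not} match Uglov's argument, and the mismatch is not cosmetic. You propose to write $p_m=\sum_k\rho_k^{(m)}$, expand the commutator as a double sum of local terms, and collect $q$-powers into geometric series. This is essentially the level-one computation of \cite{LLT1997}, and in higher level the combinatorics of iterated straightening across both colour gradings makes a direct bookkeeping argument of this kind very hard to carry through; in particular your claim that ``single-site operators at distinct, far-apart positions commute on the nose'' is delicate once straightening propagates. Uglov avoids this entirely: he first shows that the commutator $[p_m,p_{-m}]$ commutes with the $\Ue$- and $\Ul$-actions, hence acts by a scalar on each highest-weight component; he then computes that scalar by evaluating on an \emph{asymptotic} wedge (Definition~\ref{defasymptotic} in the present paper), where the higher-level action decouples and reduces to the already-known level-one case. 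This reduction-to-level-one via asymptotic vectors is the point the paper singles out, and it reappears in Section~\ref{cbschur}. So your outline is not wrong in spirit, but your description of ``precisely the computation carried out by Uglov'' is inaccurate for the commutation relations, and if you actually intend to redo rather than quote the proof, you should follow the asymptotic-wedge route rather than attempt the direct expansion.
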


For a proof, we refer to \cite[Proposition 4.4 and 4.5]{Uglov1999}.
This is quite technical and is done in two distinct steps, the second of which
requires the notion of \textit{asymptotic} wedge. This will be crucial in Section \ref{cbschur}.

\begin{cor}\label{corboson}
The action of the bosons preserves the level $\ell$ Fock spaces $\cF_\bs$ for $|\bs|=s$.
\end{cor}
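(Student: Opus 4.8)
The plan is to show that the boson operators $p_m$, as defined in Theorem~\ref{thmboson}, act on each summand $\cF_\bs$ of the decomposition $\cF_s=\bigoplus_{|\bs|=s}\cF_\bs$ without mixing the summands. The key point is that the summand $\cF_\bs$ to which a given ordered wedge $u_\bbe$ belongs is determined by a combinatorial invariant of $\bbe$ that is preserved by the substitution $\be_k\mapsto\be_{k-e\ell m}$ appearing in the formula for $p_m$. Concretely, if $\bbe$ indexes the beads of $\cA(\la,s)$ and $\tau(\ket{\la,s})=\ket{\bla,\bs}$, then the multicharge $\bs$ depends only on how the bead positions of $\cA(\la,s)$ distribute into the $\ell$ ``runners'' that $\tau$ reads off. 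From the explicit formula for $\tau$, the runner to which a position $c\in\Z$ is sent is governed by $\lfloor -c/e\rfloor \bmod \ell$, i.e.\ by the residue of $c$ modulo $e\ell$ (together with the count of beads, which fixes the total charge).

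First I would make precise that $\ket{\bla,\bs\rangle\in\cF_\bs$ with a fixed $\bs$ is equivalent to saying that the bead set $\{\be_1,\be_2,\dots\}\subset\Z$ of $\cA(\la,s)$ has a prescribed number of beads in each residue class modulo $e\ell$ lying in each half-line — more simply, that for each residue class $r\bmod e\ell$ the quantity ``number of beads $\geq$ some reference minus number of holes $<$ the reference in class $r$'' is fixed, this being exactly the datum that $\tau$ converts into the individual charges $s_1,\dots,s_\ell$. Then I would observe that the operator $p_m$ replaces a single bead at position $\be_k$ by a bead at position $\be_k-e\ell m$, and $\be_k-e\ell m\equiv\be_k\pmod{e\ell}$. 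Hence every bead movement occurring in $p_m(u_\bbe)$ stays within a single residue class modulo $e\ell$, and therefore preserves all of the per-residue-class invariants, hence preserves $\bs$. Since $p_m(u_\bbe)$ is (after straightening) a $\Q(q)$-linear combination of ordered wedges, each of which has the same $\bs$ as $u_\bbe$, we conclude $p_m(\cF_\bs)\subseteq\cF_\bs$.

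Two routine points need care. One must check that the \emph{straightening relations} of \cite[Proposition 3.16]{Uglov1999} used to re-express $p_m(u_\bbe)$ in the ordered basis $B_s$ do not change the residue-class-modulo-$e\ell$ data either; this holds because straightening only ever swaps or merges beads within arithmetic progressions of step dividing $e$ (indeed it is built from the $q$-wedge relations on $\La^{e\ell}$-blocks), so again nothing leaves its class mod $e\ell$. One must also confirm that the infinite sum in the formula for $p_m$ is well defined on $\cF_s$ and that the argument is compatible with the two-step construction via asymptotic wedges in \cite{Uglov1999}; but since Theorem~\ref{thmboson} already grants that $p_m$ is a well-defined operator on $\cF_s$, it suffices to track the $\bs$-grading on the explicit formula, which is what the above does. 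The main (and essentially only) obstacle is bookkeeping: setting up the dictionary between the charge $\bs$ and the residue-mod-$e\ell$ statistics of the abacus $\cA(\la,s)$ precisely enough that ``$\be_k\equiv\be_k-e\ell m$'' visibly implies ``$\bs$ is unchanged.'' Once that dictionary is in place the corollary is immediate.
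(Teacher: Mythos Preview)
Your approach is correct and is exactly the first of the two justifications the paper offers in place of a proof (the paper only remarks that one can use the explicit action of $p_m$ to see that the $\ell$-charge is preserved, or alternatively cite Uglov). One small correction: in Uglov's straightening relations the bead positions shift by multiples of $n=e\ell$, not by ``steps dividing $e$''; once you state it that way, each term appearing after straightening still has the same multiset of residues modulo $e\ell$, so under $\tau$ every bead stays on its runner and the charges $s_j$ are unchanged, exactly as you argue.
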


One can use the explicit action	of $p_m$ on $\cF_s$ to show that it preserves the $\ell$-charges $\bs$,
or rely on Uglov's argument \cite[Section 4.3]{Uglov1999}.
As a consequence of this corollary, $\cF_s=\bigoplus_{|\bs|=s} \cF_\bs$ also holds as $\cH$-module decomposition.

\subsection{Some notations and definitions}

For two partitions $\la$ and $\mu$, denote $\la + \mu$ the reordering of $(\la_1,\mu_1,\la_2,\mu_2,\dots)$
and $k\la$ the partition $(\la_1^k.\la_2^k.\dots)$.
Extend these notations to $\ell$-partitions by doing these operations coordinatewise.

\begin{rem}\label{remadd}
This is not the usual notation for adding partitions or multiplying by an integer. 
One recovers the usual one by conjugating
\end{rem}
  
\ytableausetup{centertableaux}

\begin{exa}
We have
$$\ydiagram{4,1,1} + 2 \, \ydiagram{2,1} =\ydiagram{4,1,1} + \ydiagram{2,2,1,1} = \ydiagram{4,2,2,1,1,1,1}\,.$$
\end{exa}

We also define the notion of asymptotic charges, which will mostly be useful in Section \ref{cbschur}.

\begin{defi}\label{defasymptotic}
A wedge $u_\bbe\in\cF_s$ is called \textit{$j_0$-asymptotic} if there exists $j_0\in\{1,\dots,\ell\}$ 
such that the corresponding charged multipartition $|\bla,\bs\rangle$ verifies
$|s_j-s_{j_0}|\geq |\bla|$ for all $j\neq j_0$.
\end{defi}

\subsection{The Heisenberg crystal}

A notion of crystal for the Heisenberg algebra, or $\cH$-crystal, has been indepedently introduced by Losev \cite{Losev2015}
and the author \cite{Gerber2016}.
Explicit formulas for computing this crystal have been given in some particular cases: 
asymptotic in \cite{Losev2015} and \textit{doubly highest weight} in \cite{Gerber2016}.

\medskip

Recall the definition of the $\cH$-crystal according to \cite{Gerber2016}.
This requires the crystal level-rank duality exposed in Appendix.

\begin{defi}\label{defdhw} Let $|\bla,\bs\rangle \in B(s)$,
which we identify with its level-rank dual charged $e$-partition. 
We call $|\bla,\bs\rangle$ a \textit{doubly highest weight vertex} if
it is a highest weight vertex simultaneously  in the $\Ue$-crystal and in the $\Ul$-crystal.
\end{defi}

Some important properties of doubly highest weight vertices are exposed in \cite[Section 5.2]{Gerber2016}.
In particular, an element $|\bla,\bs\rangle$ is a doubly highest weight vertex if and only if it has a totally $e$-periodic
$\ell$-abacus and a totally $\ell$-periodic $e$-abacus,
according to a result by Jacon and Lecouvey \cite{JaconLecouvey2012}, see the definition therein.
Moreover, every bead of a given period encodes a same part size in $\bla$, and one can define
the partition $\ka=\ka|\bla,\bs\rangle$ as $(\ka_1,\ka_2\,\dots)$
where $\ka_k$ is the part encoded by the $k$-th period.

\begin{defi}\label{defbsigma}
Let $\si\in\Pi$. 
The Heisenberg crystal operator $\tb_\si$ (respectively $\tb_{-\si}$) is the uniquely determined map $B_s \ra  B_s \, |\bla,\bs\rangle \mapsto |\bmu,\bs\rangle$ 
(respectively $B_s\ra B_s\sqcup\{0\}$) such that
\begin{enumerate}
 \item if $|\bla,\bs\rangle$ is a doubly highest weight vertex, then $|\bmu,\bs\rangle$ is obtained from
 $|\bla,\bs\rangle$ by shifting the $k$-th period of $\ell$-abaci of $|\bla,\bs\rangle$ by $\si_k$ steps to the right (respectively to the left when possible,
and $\tb_{-\si}|\bla,\bs\rangle=0$ otherwise),
\item it commutes with the Kashiwara crystal operators of $\Ue$ and of $\Ul$.
\end{enumerate}
\end{defi}

\begin{defi}\label{defHhw} 
We say that $|\bla,\bs\rangle\in B_s$ is a highest weight vertex for $\cH$ if
$\tb_{-\si}|\bla,\bs\rangle=0$ for all $\si\in\Pi$.
\end{defi}

Note that each $\tb_{\pm\si}$ is well defined since (2) allows to define $\tb_{\pm\si}|\bla,\bs\rangle$ even when
$|\bla,\bs\rangle$ is not a doubly highest weight vertex.
Moreover, \cite[Theorem 7.6]{Gerber2016} claims that in the asymptotic case, the Heisenberg crystal operators 
coincide with the combinatorial maps introduced by Losev \cite{Losev2015}.
Note also that the definition of $\ka$ for doubly highest weight vertices induces (by (2) of Definition \ref{defbsigma}) 
a surjective map $\ka : B_s  \ra \Pi$
such that $\tb_{-\ka|\bla,\bs\rangle}|\bla,\bs\rangle$ is a highest weight vertex for $\cH$ for all $\bla\in\Pi_\ell$.

\medskip

In order to have a description of the $\cH$-crystal similar to the $\Ue$-crystal, see Figure \ref{uecrystal},
we wish to define the $\cH$-crystal as a graph whose arrows have minimal length.
Therefore, we define the following maps 
$$\tb_{1,c} = \tb_{\eta} \tb_{-\ka} \mand \tb_{-1,d} = \tb_{\theta} \tb_{-\ka}$$
where $\eta = \ka\sqcup\{\ga\}$ with $\ga = (a,b)$ being the addable box of $\ka$ verifying $b-a=c$ and
where $\theta = \ka\backslash\{\ga\}$ with $\ga = (a,b)$ being the removable box of $\ka$ verifying $b-a=d$,

\begin{defi}\label{defHcrystal}
The $\cH$-crystal of the wedge space $\cF_s$ is the graph $\sC_s$ with
\begin{enumerate}
 \item vertices: all charged $\ell$-partitions $|\bla,\bs\rangle$ with $\bla\in\Pi_\ell$ and $|\bs|=s$.
 \item arrows : $|\bla,\bs\rangle \overset{c}{\lra} |\bmu,\bs\rangle$ if $|\bmu,\bs\rangle=\tb_{1,c}|\bla,\bs\rangle$.
\end{enumerate}
\end{defi}

\begin{rem}
By definition, each $\tb_{\pm 1,c}$ is a composition of maps $\tb_{\pm\si}$.
Conversely, each $\tb_\si$ is a composition of maps $\tb_{1,c}$, see \cite[Formula (6.17)]{Gerber2016}.
We will also call the maps $\tb_{1,c}$  Heisenberg crystal operators.
\end{rem}

\begin{prop}\label{propHcrystal}\
\begin{enumerate}
 \item The $\cH$-module decomposition $\cF_s=\bigoplus_{|\bs|=s}\cF_\bs$ induces a decomposition of $\sC_s$.
 \item Each connected component of $\sC_s$ is isomorphic to the Young graph.
 \item A vertex $|\bla,\bs\rangle$ is source in $\sC_s$ if and only if $|\bla,\bs\rangle$ is a highest weight vertex for $\cH$.
 \item The depth of an element $|\bla,\bs\rangle$ in $\sC_s$ is $|\ka|\bla,\bs\rangle|$.
\end{enumerate}
\end{prop}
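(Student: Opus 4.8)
The plan is to prove the four statements essentially as consequences of the two defining properties of the Heisenberg crystal operators $\tb_{\pm\si}$ in Definition \ref{defbsigma}, together with the structure of the maps $\tb_{1,c}$ as compositions $\tb_\eta\tb_{-\ka}$ and the translation interpretation of the operators on doubly highest weight vertices.

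\medskip

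\textbf{Proof of (1).}
First I would note that, by Corollary \ref{corboson} and the resulting $\cH$-module decomposition $\cF_s=\bigoplus_{|\bs|=s}\cF_\bs$, the action of the bosons $p_m$ preserves each $\cF_\bs$. Since the Heisenberg crystal operators are built (via the boson-fermion correspondence / Schur functions at $q=0$, or directly via Definition \ref{defbsigma}) out of operators that preserve the multicharge $\bs$, each $\tb_{\pm\si}$ maps $B_\bs$ to $B_\bs\sqcup\{0\}$. Concretely: shifting the $k$-th period of the $\ell$-abacus of a doubly highest weight vertex does not change $\bs$, so property (1) of Definition \ref{defbsigma} preserves $\bs$; and the Kashiwara operators of $\Ue$ and $\Ul$ also preserve $\bs$ (for $\Ue$ this is immediate, for $\Ul$ it is the level-rank statement recalled in the Appendix), so property (2) propagates this to all vertices. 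Hence every $\tb_{1,c}$ preserves $\bs$, so the arrows of $\sC_s$ never connect different $B_\bs$, giving the decomposition $\sC_s=\bigsqcup_{|\bs|=s}\sC_\bs$.

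\medskip

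\textbf{Proof of (2) and (4).}
The key observation is the map $\ka:B_s\to\Pi$ and the fact (from the remark following Definition \ref{defHhw}) that $\tb_{-\ka|\bla,\bs\rangle}|\bla,\bs\rangle$ is an $\cH$-highest weight vertex, while $\tb_{\ka|\bla,\bs\rangle}$ applied to that highest weight vertex recovers $|\bla,\bs\rangle$. I would first establish that on a fixed connected component the map $\ka$ is a bijection onto $\Pi$: surjectivity follows because starting from the $\cH$-highest weight vertex $|\bmu,\bs\rangle$ of the component one has $\ka\,\tb_\si|\bmu,\bs\rangle=\si$ for every $\si\in\Pi$ (this is the content of property (1) of Definition \ref{defbsigma} in the doubly highest weight case, transported by property (2) and the commutation with Kashiwara operators to the whole component); injectivity follows because $|\bla,\bs\rangle$ is recovered from its component's highest weight vertex together with $\ka|\bla,\bs\rangle$. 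Then the maps $\tb_{1,c}$, by their definition $\tb_\eta\tb_{-\ka}$ with $\eta=\ka\sqcup\{\ga\}$, $\ga$ an addable box of content $c$, act on the component by: go down to the highest weight vertex via $\tb_{-\ka}$, then apply $\tb_\eta$, which produces the vertex with $\ka$-value $\ka\sqcup\{\ga\}$. Thus under the identification $|\bla,\bs\rangle\leftrightarrow\ka|\bla,\bs\rangle$ the edge $|\bla,\bs\rangle\overset{c}{\to}|\bmu,\bs\rangle$ becomes exactly $\si\to\si\sqcup\{\ga\}$ where $\ga$ is the addable box of $\si$ with content $c$. This is precisely the Young graph (Young's lattice), proving (2); and the depth of $|\bla,\bs\rangle$, i.e.\ the length of a maximal path from a source down to it, equals the number of boxes of $\ka|\bla,\bs\rangle$, namely $|\ka|\bla,\bs\rangle|$, proving (4). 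One should check here that $\tb_{1,c}$ really is well defined — that there is at most one addable box of each content and the composition does not depend on choices — but this is exactly what was arranged in the paragraph preceding Definition \ref{defHcrystal}.

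\medskip

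\textbf{Proof of (3).}
A vertex $|\bla,\bs\rangle$ is a source in $\sC_s$ iff it is not in the image of any $\tb_{1,c}$. By the Young-graph description just obtained, $|\bmu,\bs\rangle=\tb_{1,c}|\bla,\bs\rangle$ forces $\ka|\bmu,\bs\rangle=\ka|\bla,\bs\rangle\sqcup\{\ga\}$, so $|\bmu,\bs\rangle$ has $|\ka|>0$; conversely if $\ka|\bla,\bs\rangle\neq\emptyset$ then removing any removable box of $\ka|\bla,\bs\rangle$ exhibits $|\bla,\bs\rangle$ as $\tb_{1,c}$ of another vertex. Hence $|\bla,\bs\rangle$ is a source iff $\ka|\bla,\bs\rangle=\emptyset$. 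It remains to identify $\{\,|\bla,\bs\rangle : \ka|\bla,\bs\rangle=\emptyset\,\}$ with the set of $\cH$-highest weight vertices of Definition \ref{defHhw}. One inclusion is the cited fact that $\tb_{-\ka|\bla,\bs\rangle}|\bla,\bs\rangle$ is $\cH$-highest weight, so $\ka|\bla,\bs\rangle=\emptyset$ gives $\tb_{-\si}|\bla,\bs\rangle=0$ for all $\si$ (using that $\tb_{-\si}=\tb_{-\si}\tb_{\ka}\tb_{-\ka}$ and $\tb_{\ka}\tb_{-\ka}=\mathrm{id}$ here, so $\tb_{-\si}$ applied to an $\cH$-highest weight vertex vanishes for nonempty $\si$); the reverse inclusion follows because if $\ka|\bla,\bs\rangle=\si\neq\emptyset$ then $\tb_{-\si}|\bla,\bs\rangle\neq0$, contradicting Definition \ref{defHhw}.

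\medskip

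\textbf{Main obstacle.}
The delicate point is not any single computation but justifying that property (2) of Definition \ref{defbsigma} — commutation with the Kashiwara operators of both $\Ue$ and $\Ul$ — genuinely propagates the translation description of $\tb_{\pm\si}$ from doubly highest weight vertices to \emph{every} vertex in a coherent, well-defined way, and in particular that $\ka$ is constant on the ``Kashiwara-bicrystal orbit'' of a doubly highest weight vertex and that these orbits partition $B_\bs$. This rests on the structural results of \cite{Gerber2016} (every doubly highest weight vertex has a totally $e$-periodic $\ell$-abacus and totally $\ell$-periodic $e$-abacus, and the bicrystal components are indexed by such vertices), which I would invoke rather than reprove; granting them, all four assertions follow formally as above.
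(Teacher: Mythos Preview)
Your proposal is correct and follows essentially the same approach as the paper's proof: both derive (1) from the charge-preserving property built into Definition \ref{defbsigma}, and both obtain (2)--(4) by identifying each connected component with the Young graph via the bijection $\si\mapsto\tb_\si|\overline{\bla},\bs\rangle$ (with inverse $\ka$). The paper is terser, invoking \cite[Remark 6.16]{Gerber2016} directly for this identification, whereas you unpack it explicitly.

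One small slip to flag in your argument for (1): the Kashiwara operators of $\Ul$ do \emph{not} in general preserve the $\ell$-charge $\bs$ --- see the proof of Proposition \ref{propheis2}, where $|\bla,\bs\rangle=\dot{F}_{\bj}|\bbla,\bbs\rangle$ with $\bbs$ possibly different from $\bs$. This does not break your argument: the commutation in Definition \ref{defbsigma}(2) still propagates charge preservation of $\tb_\si$ from doubly highest weight vertices to all vertices, since if $\tdf_j$ sends charge $\bs$ to $\bs'$ and $\tb_\si$ preserves $\bs$, then $\tb_\si\tdf_j=\tdf_j\tb_\si$ forces $\tb_\si$ to preserve $\bs'$ as well. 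But your parenthetical justification (``for $\Ul$ it is the level-rank statement recalled in the Appendix'') is not right; you should either drop that claim or, more simply, just cite Definition \ref{defbsigma} directly as the paper does.
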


\begin{proof}
We know from Definition \ref{defbsigma} that the action of the crystal Heisenberg operators preserves the multicharge, proving (1).
In particular, there is a notion of $\cH$-crystal for $\cF_\bs$, which we denote $\sC_\bs$.
Moreover, the $\cH$-crystal is characterised by being the preimage of a $\Uinf$-crystal on the set of partitions
under a certain bijection, depending on $\ka$, see \cite[Remark 6.16]{Gerber2016}.
The $\Uinf$-crystal is exactly the Young graph on partitions, see Figure \ref{younggraph}, where the arrows are colored by the contents of the added boxes,
which proves (2).
In fact, the bijection from the Young graph to a given connected component, parametrised by its source vertex $|\overline{\bla},\bs\rangle$
is given by $\si \mapsto \tb_\si|\overline{\bla},\bs\rangle$, and its inverse is $|\bla,\bs\rangle \mapsto \ka|\bla,\bs\rangle$.
Point (3) is clear by definition, and (4) follows from the definition of $\ka$.
\end{proof}

\begin{figure}[h] \centering
\begin{tikzpicture}[scale=0.7]

\node (a) at (4,8) {%
${ \tiny  \text{-} }$
};

\node (b) at (4,6) {%
{${ \tiny \ydiagram{1}   }$}
};

\node (c1) at (2,4) {%
{${ \tiny \ydiagram{2} }$}
}; 
\node (c2) at (6,4) {%
{${ \tiny \ydiagram{1,1} }$}
};

\node(d1) at (1,2) {%
{${ \tiny \ydiagram{3}  }$}
};
\node(d2) at (4,2) {%
{${ \tiny \ydiagram{2,1} }$}
};
\node(d3) at (7,2) {%
{${ \tiny \ydiagram{1,1,1} }$}
};

\node(e1) at (0,0) {%
{${ \tiny \ydiagram{4}  }$}
};
\node(e2) at (2,0) {%
{${ \tiny \ydiagram{3,1} }$}
};
\node(e3) at (4,0) {%
{${ \tiny \ydiagram{2,2} }$}
};
\node(e4) at (6,0) {%
{${ \tiny \ydiagram{2,1,1}  }$}
};
\node(e5) at (8,0) {%
{${ \tiny \ydiagram{1,1,1,1} }$}
};

\draw[->] (a) -- node [font=\tiny, left] {} (b) ;
\draw[->] (b) -- node [font=\tiny,left] {} (c1) ;
\draw[->] (b) -- node [font=\tiny, above] {} (c2) ;
\draw[->] (c1) -- node [font=\tiny, above] {} (d1) ;
\draw[->] (c1) -- node [font=\tiny, above] {} (d2) ;
\draw[->] (c2) -- node [font=\tiny, above] {} (d3) ;
\draw[->] (c2) -- node [font=\tiny, above] {} (d2) ;

\draw[->] (d1) -- node [font=\tiny, above] {} (e1) ;
\draw[->] (d1) -- node [font=\tiny, above] {} (e2) ;
\draw[->] (d2) -- node [font=\tiny, above] {} (e2) ;
\draw[->] (d2) -- node [font=\tiny, above] {} (e3) ;
\draw[->] (d2) -- node [font=\tiny, above] {} (e4) ;
\draw[->] (d3) -- node [font=\tiny, above] {} (e4) ;
\draw[->] (d3) -- node [font=\tiny, above] {} (e5) ;
\end{tikzpicture}

\caption{The beginning of the Young graph.}
\label{younggraph}
\end{figure}

\section{Canonical bases and Schur functions}\label{cbschur}

\subsection{The boson-fermion correspondence}\label{bosonfermion}

Denote $\La$ the algebra of symmetric functions,
that is, the projective limit of the $\Q(q)$-algebras of symmetric polynomials in
finitely many indeterminates \cite[Chapter 1]{Macdonald1998}:
$$\La = \Q(q)[X_1,X_2,\dots]^{\fS_\infty}.$$

The space $\La$ has several natural linear bases, among which:
\begin{itemize}
\item the monomial functions $\left\{ M_\si \, ; \, \si\in\Pi\right\}$ where 
 $M_\si=\sum_{\pi} X_1^{\pi_1}X_2^{\pi_2}\dots$ where the sum runs over all permutations $\pi$ of $\si$,
 \item the complete functions $\left\{ H_\si \, ; \, \si\in\Pi\right\}$ where  
 $H_\si=H_{\si_1} H_{\si_2}\dots$ and $H_m=\sum_{k_1\leq\dots\leq k_m} X_{k_1}\dots X_{k_m}$
 for $r\in\N$,
 \item the power sums $\left\{ P_\si \, ; \, \si\in\Pi\right\}$ where  
 $P_\si=P_{\si_1} P_{\si_2}\dots$ and $P_m=\sum_{k\geq 1} X_{k}^r$  for $r\in\N$,
 \item the Schur functions $\left\{ S_\si \, ; \, \si\in\Pi\right\}$ where  
 $S_\si=\sum_{\pi\in\Pi}K_{\si,\pi}M_\pi$ where $K_{\si,\pi}$ are the Kostka numbers, see \cite[Chapter 7]{Stanley2001}.
\end{itemize}
The expansion of $H_m$ in the basis of the power sums is given by 
$$H_m=\sum_{\substack{\pi\in\Pi \\ |\pi|=m}}\frac{1}{z_\pi}P_\pi,$$
where $z_\pi = \Pi_{k>0} k^{\al_k}\al_k$ with $\al_k=\pi'_k-\pi'_{k+1}$, for all $\pi\in\Pi$.
Moreover, by duality, the Kostka numbers also appear as the coefficients of the complete functions in the basis of the Schur functions:
$$H_\si=\sum_{\pi\in\Pi}K_{\pi,\si}S_\pi.$$

By a result of Miwa, Jimbo and Date \cite{MiwaJimboDate2000}, 
there is a vector space isomorphism
$$
\begin{array}{ccc}
\cF_s & \overset{\sim}{\lra} & \La \\ 
|\la,s\rangle & \longmapsto & S_\la,
\end{array}
$$
called the \textit{boson-fermion} correspondence.
In fact, when refering to the symmetric realisation $\La$, one sometimes uses the term \textit{bosonic} Fock space,
as opposed to the fermionic, antisymmetric definition of $\cF_s$.
The following result is \cite[Section 4.2 and Proposition 4.6]{ShanVasserot2012}.

\begin{prop}\label{propheis}
 There is a $\cH$-module isomorphism $\cF_s\simeq \La$, 
where, at $q=1$, the action of $p_m$ on $\cF_s$ 
is mapped to the multiplication by $P_m$ on $\La$.
\end{prop}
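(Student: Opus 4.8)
The plan is to transport the Heisenberg action through the boson-fermion correspondence and identify it with multiplication by power sums at the classical specialization $q=1$. First I would recall that the boson-fermion correspondence $\cF_s\overset{\sim}{\lra}\La$, $|\la,s\rangle\mapsto S_\la$, is a priori only a vector space isomorphism, so the content of the proposition is that the $\cH$-action on $\cF_s$ from Theorem \ref{thmboson} matches a natural $\cH$-action on $\La$. The natural candidate is the one where $p_m$ (for $m>0$) acts by multiplication by $P_m$ and $p_{-m}$ acts by the adjoint of this multiplication with respect to the Hall inner product (i.e. $m\,\partial/\partial P_m$ up to the $q$-factor), which at $q=1$ satisfies exactly the undeformed Heisenberg relations $[p_m,p_{-m}]=m$. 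So the proof reduces to checking that under $|\la,s\rangle\mapsto S_\la$ the operator $p_m$ of Theorem \ref{thmboson}, specialized at $q=1$, becomes multiplication by $P_m$ on Schur functions.

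The key computational step is therefore to compute $p_m(u_\bbe)|_{q=1}$ explicitly on the fermionic side and compare with $P_m\cdot S_\la$ on the bosonic side. Here I would use the classical fact (Date-Jimbo-Kashiwara-Miwa; see also \cite{MiwaJimboDate2000}) that in the undeformed semi-infinite wedge space the power sum $P_m$ acts by $\sum_{k\geq 1} u_{\be_1}\w\dots\w u_{\be_k - m}\w\dots$, and the Murnaghan-Nakayama rule, which describes exactly this: $P_m\cdot S_\la = \sum_\mu (-1)^{\mathrm{ht}(\mu/\la)} S_\mu$ summed over $\mu\supseteq\la$ with $\mu/\la$ a border strip (ribbon) of size $m$, with signs coming from the straightening of the shifted wedge into an ordered wedge. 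The formula in Theorem \ref{thmboson} has the shift $\be_{k-e\ell m}$ rather than $\be_k-m$; here one should observe that the relevant normalization is built into the identification $|\la,s\rangle\llra u_\bbe$ and that, at $q=1$, Uglov's straightening relations degenerate to the ordinary antisymmetrization relations of the classical wedge space, so the factor $e\ell$ in the index shift is absorbed — equivalently, one may cite \cite[Section 4.3 and Proposition 4.5]{Uglov1999} where the compatibility of the $p_m$-action with the classical boson-fermion correspondence at $q=1$ is established. The sign bookkeeping in the straightening is precisely the Murnaghan-Nakayama sign, so the two sides agree term by term.

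Concretely, the steps in order: (i) recall the Hall-pairing $\cH$-module structure on $\La$ with $p_m\mapsto (P_m\cdot)$ for $m>0$, noting it satisfies the $q=1$ Heisenberg relations of Definition \ref{defheis}; (ii) recall that the undeformed wedge space carries the standard $\mathfrak{gl}_\infty$ Heisenberg action with $p_m$ acting by the index-shift formula, and that under the classical boson-fermion isomorphism $u_\bbe\mapsto S_\la$ this becomes multiplication by $P_m$ — this is the Murnaghan-Nakayama rule together with the Jacobi-Trudi/straightening computation; (iii) check that the specialization at $q=1$ of the operator in Theorem \ref{thmboson}, together with the $q=1$ degeneration of Uglov's straightening relations \cite[Proposition 3.16]{Uglov1999}, recovers exactly this undeformed action (this is where the $e\ell$ normalization is matched, and where I would lean on \cite[Proposition 4.5]{Uglov1999}); (iv) conclude that $|\la,s\rangle\mapsto S_\la$ intertwines the two $\cH$-module structures at $q=1$.

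The main obstacle is step (iii): carefully matching normalizations between Uglov's $q$-wedge conventions (the factor $e\ell$ in the index shift, the precise form of the straightening relations, the role of the charge $s$) and the classical conventions for the boson-fermion correspondence, and verifying that no spurious scalar or sign is introduced upon setting $q=1$. A clean way around the bulk of the combinatorics is to invoke that both sides are $\cH$-module structures on the same underlying space, that $\cH$ (at $q=1$) is generated by $p_1$ and $p_{-1}$ up to the abelian part, and that these low-degree cases are directly the classical statement $P_1\cdot S_\la=\sum_{\mu=\la+\square}S_\mu$ and its adjoint, which is immediate from the branching rule for Schur functions; the general $p_m$ then follows since it is determined on the cyclic module by the action of the $p_{\pm 1}$ via the bracket relations. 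This reduces the proposition to the elementary fact that adding a box to $\la$ corresponds to the map $u_\bbe\mapsto\sum_k u_{\dots\be_k-e\ell\dots}$ at $q=1$, which is exactly \cite[Proposition 4.4]{Uglov1999} at $q=1$.
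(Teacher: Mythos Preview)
The paper does not prove this proposition at all: it simply records it as a citation to \cite[Section 4.2 and Proposition 4.6]{ShanVasserot2012}. So there is no argument in the paper to compare against, and your attempt is already more than what the paper supplies. That said, your proposal contains two genuine gaps that would prevent it from going through as written.

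First, the claim in your final paragraph that $\cH$ at $q=1$ ``is generated by $p_1$ and $p_{-1}$ up to the abelian part'' is false. In the Heisenberg algebra the generators $p_m$ with $m>0$ pairwise commute, as do the $p_{-m}$, and the only nontrivial brackets are the central ones $[p_m,p_{-m}]$. In particular $p_2$ commutes with both $p_1$ and $p_{-1}$ and cannot be recovered from them by any bracket identity. Hence the reduction ``the general $p_m$ then follows since it is determined \dots\ by the action of the $p_{\pm1}$ via the bracket relations'' does not work, and the shortcut you propose collapses. You must treat each $p_m$ separately.

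Second, and more seriously, the $e\ell$ issue is not merely a normalization to be ``absorbed.'' Under the map $|\la,s\rangle\mapsto S_\la$ the undeformed operator $\sum_k u_{\be_1}\wedge\cdots\wedge u_{\be_k-N}\wedge\cdots$ corresponds, via Murnaghan--Nakayama, to multiplication by $P_N$. Since the formula in Theorem~\ref{thmboson} shifts by $N=e\ell m$, the operator $p_m$ at $q=1$ corresponds under this particular map to multiplication by $P_{e\ell m}$, not $P_m$. Nothing in the identification $\be_k=\la_k+s-k+1$ or in the degeneration of the straightening relations changes this index. Relatedly, your closing sentence --- that ``adding a box to $\la$ corresponds to the map $u_\bbe\mapsto\sum_k u_{\dots\be_k-e\ell\dots}$'' --- is off by exactly this factor: shifting by $e\ell$ adds an $e\ell$-ribbon, not a single box. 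The upshot is that the $\cH$-module isomorphism asserted in the proposition is \emph{not} the boson--fermion map $|\la,s\rangle\mapsto S_\la$ you are trying to use (indeed $\cF_s$ has infinitely many $\cH$-highest weight vectors while $\La$ with the action $p_m\mapsto P_m\cdot$ has only one, so they are not isomorphic via that map). The statement, following \cite{ShanVasserot2012}, is rather that each Heisenberg-cyclic piece of $\cF_s$ is a copy of the Fock module $\La$, with $p_m$ acting (at $q=1$) as $P_m$; your argument would need to be reorganised around that interpretation rather than around transporting along $|\la,s\rangle\mapsto S_\la$.
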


In general, the action of $p_m$ is mapped to a $q$-deformation of the multiplication by $P_m$, 
and $p_{-m}$ to a $q$-deformation of the derivation with respect to $P_m$, see \cite[Section 5]{LLT1997} and \cite[Section 5.1]{Uglov1999}.

\subsection{Action of the Schur functions}

In \cite{LeclercThibon2001}, Leclerc and Thibon studied the action of the Heisenberg algebra on level $1$ Fock spaces
in order to give an analogue of Lusztig's version \cite{Lusztig1989} of the Steinberg tensor product theorem.
Their idea was to use a different basis of $\La$ to compute the Heisenberg action in a simpler way,
namely that of Schur functions.
This result has been generalised to the level $\ell$ case by Iijima in a particular case \cite{Iijima2012}.

Independently, the Schur functions have been used by Shan and Vasserot 
to categorify the Heisenberg action in the context of Cherednik algebras.
More precisely, they constructed a functor on the category $\cO$ for cyclotomic rational 
Cherednik algebras corresponding to the multiplication by a Schur function on the bosonic Fock space $\La$ \cite[Proposition 5.13]{ShanVasserot2012}.

The aim of this section is to use Iijima's result to recover in a direct, simple way the results of \cite{Losev2015} and \cite{Gerber2016}
and, by doing so, bypassing Shan and Vasserot's categorical constructions.

\subsubsection{Canonical bases of the Fock space}

In the early nineties, Kashiwara and Lusztig have independently introduced the notion of canonical bases
for irreducible highest weight representations of quantum groups, see e.g. \cite{Kashiwara1993}.
They are characterised by their invariance under a certain involution.
Uglov \cite{Uglov1999} has
proved an analogous result for the Fock spaces $\cF_\bs$, even though it is no longer irreducible.
We recall the definition of the involution on $\cF_s$ and Uglov's theorem.

For any $r\in\N$ and $t_1,\dots,t_r\in\Z^r$, let
$\iota(t_1,\dots,t_r)=\sharp \left\{ \, (k,k') \,\, | \,\, k<k' \text{ and } t_k=t_{k'} \, \right\}$
that is, the number of repetitions in $(t_1,\dots,t_r)$.

\begin{defi}\label{defibar}
The bar involution is the $\Q(q)$-vector space automorphism 
$$
\begin{array}{lcl}
\cF_s & \lra & \cF_s \\
q & \longmapsto & \overline{q}=q^{-1} \\
u_\be & \longmapsto & \overline{u_\be}
\end{array}
$$
with
$$\overline{u_\be} = \overline{u_{\be_1}\w\dots \w u_{\be_r}}\w u_{\be_{r+1}}\w\dots
= (-q)^{\iota(y_1,\dots,y_r)} q^{\iota(x_1,\dots,x_r)} (u_{\be_r}\w \dots \w u_{\be_1})\w u_{\be_{r+1}}\w\dots
$$
where $x_k = x(\be_k)$ and $y_k = y(\be_k)$ for all $k=1,\dots,r$ according to the notations of Section \ref{abaci}.
\end{defi}

The bar involution behaves nicely on the wedge space, in particular
it preserves the level $\ell$ Fock spaces $\cF_\bs$ for $|\bs|=s$,
and commutes with the bosons $p_m$, this is \cite[Section 4.4]{Uglov1999}.
We can now state Uglov's result, that is derived from the fact that the 
matrix of the bar involution is unitriangular.

\begin{thm}\label{thmcanonicalbases}
Let $\bs\in\Z^\ell$ such that $|\bs|=s$.
There exist unique bases \mbox{$\cG^+=\left\{ G^+(\bla,\bs) \, ; \, \bla\in\Pi_\ell \right\}$} and 
\mbox{$\cG^-=\left\{ G^-(\bla,\bs) \, ; \, \bla\in\Pi_\ell \right\}$} of $\cF_\bs$ such that, for $\flat\in\{+,-\}$,
\begin{enumerate}
 \item $\overline{G^\flat(\bla,\bs)}= G^\flat(\bla,\bs)$
 \item $G^\flat(\bla,\bs) = |\bla,\bs\rangle \mod q^{\flat1}\cL^\flat$ 
 where $\cL^{\flat}=\bigoplus_{\bla\in\Pi_\ell}\Q[q^{\flat1}]|\bla,\bs\rangle$.
\end{enumerate}
\end{thm}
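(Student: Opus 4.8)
The statement is the standard "existence of canonical bases via a unitriangular bar involution" argument, so the plan is to reduce everything to an abstract lemma on lattices and then verify its hypotheses. First I would make precise the triangularity of the bar involution. Fix $\bs$ with $|\bs|=s$ and work inside $\cF_\bs$, which is bar-stable by the remark preceding the theorem. Using Definition \ref{defibar} together with the straightening relations of \cite{Uglov1999} (Proposition 3.16 therein), one reorders the wedge $u_{\be_r}\w\dots\w u_{\be_1}$ back into dominant order; each elementary straightening step produces the same ordered wedge up to a power of $q$ plus a $\Q(q)$-combination of wedges indexed by charged $\ell$-partitions that are strictly smaller for the dominance order on $\Pi_\ell$ (with $\bs$ fixed). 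Hence in the basis $B_\bs$ one gets
$$\overline{|\bla,\bs\rangle} = |\bla,\bs\rangle + \sum_{\bmu \lhd \bla} a_{\bla\bmu}(q)\,|\bmu,\bs\rangle,\qquad a_{\bla\bmu}(q)\in\Q[q,q^{-1}],$$
where $\lhd$ is a partial order on $\Pi_\ell$ with respect to which every lower set is finite. This is exactly the content of "the matrix of the bar involution is unitriangular" cited in the excerpt; I would either quote \cite[Section 4.4]{Uglov1999} for it or sketch the straightening bookkeeping.

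Next I would invoke the abstract Kashiwara–Lusztig lemma: if $M$ is a free $\Q[q,q^{-1}]$-module with basis $(b_\bla)$ indexed by a poset with finite lower sets, equipped with a semilinear involution (i.e. $q\mapsto q^{-1}$) that is unitriangular with respect to that poset, then for each sign $\flat\in\{+,-\}$ there is a unique bar-invariant basis $(G^\flat(\bla))$ congruent to $b_\bla$ modulo $q^{\flat 1}\cL^\flat$, where $\cL^\flat=\bigoplus\Q[q^{\flat 1}]\,b_\bla$. The proof is the usual induction on the poset: given the lower set already constructed, set $G^\flat(\bla)=|\bla,\bs\rangle+\sum_{\bmu\lhd\bla}c_{\bla\bmu}(q)\,G^\flat(\bmu,\bs)$ and solve for the $c_{\bla\bmu}$ by imposing bar-invariance; bar-invariance of $G^\flat(\bla)$ forces $\overline{c_{\bla\bmu}}-c_{\bla\bmu}$ to equal a known element, and the splitting $\Q[q,q^{-1}]=q\Q[q]\oplus\Q\oplus q^{-1}\Q[q^{-1}]$ shows that there is a unique choice of $c_{\bla\bmu}\in q\Q[q]$ (resp. $\in q^{-1}\Q[q^{-1}]$) realizing it. This simultaneously gives existence and uniqueness, proving (1) and (2).

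The one genuine thing to check, rather than quote, is the triangularity statement — that the bar involution, computed through the straightening relations, is indeed unitriangular for a well-founded order on $\Pi_\ell$. This is the main obstacle in the sense that it is the only non-formal input; everything after it is the standard lemma. I would handle it by recalling from \cite{Uglov1999} that a single straightening $u_a\w u_b$ with $a<b$ rewrites as $q^{\pm 1}u_b\w u_a$ plus lower terms governed by the residues $x(\cdot),y(\cdot)$ modulo $e$ and $e\ell$, note that iterating to fully reorder $u_{\be_r}\w\dots\w u_{\be_1}$ keeps the leading term $|\bla,\bs\rangle$ with coefficient $1$ (the accumulated $q$-powers from the reordering cancel against the prefactor $(-q)^{\iota(\by)}q^{\iota(\bx)}$ in Definition \ref{defibar}), and that all other terms move strictly down in dominance. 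Since $\cF_s=\bigoplus_{|\bs|=s}\cF_\bs$ is bar-stable componentwise, it suffices to argue inside each $\cF_\bs$, and there the dominance order on $\ell$-partitions of a fixed size is finite, giving the required well-foundedness. Once this is in place, the abstract lemma yields $\cG^+$ and $\cG^-$ with the stated properties, completing the proof.
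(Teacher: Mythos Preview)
Your proposal is correct and follows exactly the approach the paper indicates: the paper does not give a proof but simply attributes the result to Uglov and notes that it ``is derived from the fact that the matrix of the bar involution is unitriangular.'' Your sketch spells out precisely this derivation --- unitriangularity from the straightening relations, followed by the standard Kashiwara--Lusztig induction on a well-founded order --- so there is nothing to add or correct.
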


This result is compatible with Kashiwara's crystal theory.
More precisely, each integrable irreducible highest weight $\Ue$-representation is contained in $\cF_s$ for some $s$, 
by taking the span of the vectors $|\bemptyset,\bs\rangle$ for $|\bs|=s$,
and it is proved in \cite[Section 4.4]{Uglov1999} that the bases $\cG^\flat$
restricted to $\Ue |\bemptyset,\bs\rangle$ coincide with Kashiwara's lower and upper canonical bases.
Therefore, we also call $\cG^\flat$ the 
(lower or upper if $\flat=-$ or $+$ respectively) canonical basis of $\cF_\bs$.

\subsubsection{Schur functions in the asymptotic case}

Define the following operators on $\cF_s$
$$h_m=\sum_{|\pi|=m}\frac{1}{z_\pi}p_\pi, \quad h_\si= h_{\si_1}h_{\si_2}\dots
\mand
s_\si=\sum_{\pi\in\Pi}K_{\pi,\si}^{-1}h_\pi.
$$ where
$K_{\pi,\si}^{-1}$ are the inverse Kostka numbers, that is,
the entries of the inverse of the matrix of Kostka numbers. 
By Proposition \ref{propheis}, at $q=1$, 
the action of $h_\si$ (respectively $s_\si$) corresponds to the multiplication by
a complete function (respectively Schur function) on $\La$ through the boson-fermion correspondence.

\begin{thm}\label{thmschur}\
\begin{enumerate}
\item The operators $s_\si$ induce maps $\ts_\si$ on $B_s$, preserving $B_\bs$ for $|\bs|=s$.
\item Let $|\bla,\bs\rangle\in\cF_\bs$ be $j_0$-asymptotic.
We have $\ts_\si |\bla,\bs\rangle = |\bmu,\bs\rangle $ with $\bmu = \bla  + e\bsi$ where $\si_j=\emptyset$ if $j\neq j_0$ and $\si_{j_0}=\si$,
provided $|\bmu,\bs\rangle$ is still $j_0$-asymptotic.
In particular, $\ts_\si$ coincides with the Heisenberg crystal operator $\tb_\si$ in this case.
 \end{enumerate}
\end{thm}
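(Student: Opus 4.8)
The plan is to reduce everything to a computation with the boson action on asymptotic wedges, where the straightening relations degenerate and the combinatorics become transparent.

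\medskip

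\textbf{Step 1: the bosons act triangularly on canonical bases, hence descend to crystal maps.} First I would recall from Uglov that each $p_m$ (and thus each $h_m$, $h_\sigma$, $s_\sigma$, being polynomial in the $p_m$ with coefficients in $\Q(q)$) commutes with the bar involution and preserves each $\cF_\bs$ by Corollary~\ref{corboson}. The key input is that, with respect to a suitable order on $\Pi_\ell$, the operator $s_\sigma$ sends the canonical basis element $G^-(\bla,\bs)$ to $G^-(\bmu,\bs)$ plus a $\Z[q,q^{-1}]$-combination of $G^-(\bnu,\bs)$ with $\bnu$ strictly larger (or plus lower-order terms, depending on conventions) — this follows from the explicit formula in Theorem~\ref{thmboson} together with the fact that $p_m$ raises the size of the multipartition by $e\ell m$ in a controlled way. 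Passing to the quotient $\cL^-/q\cL^-$ as in Theorem~\ref{thmcanonicalbases}, each $s_\sigma$ induces a well-defined partial map $\ts_\sigma$ on $B_\bs$, which proves (1). (I would phrase this exactly as Kashiwara's mechanism for extracting crystal operators from a canonical-basis-compatible action.)

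\medskip

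\textbf{Step 2: explicit computation in the asymptotic case via Iijima's result.} For (2), the idea — following Leclerc–Thibon~\cite{LeclercThibon2001} and Iijima~\cite{Iijima2012} — is that when $|\bla,\bs\rangle$ is $j_0$-asymptotic, the multicharge is so spread out that the semi-infinite wedge $u_\bbe$ corresponding to $\tau(|\la,s\rangle)$ has its beads in the $j_0$-th "block" well-separated from the others. Concretely, applying $p_m$ via the formula of Theorem~\ref{thmboson} and re-ordering using the straightening relations, the cross-terms between the $j_0$-block and the other blocks vanish at $q=0$ (this is precisely where the asymptotic hypothesis is used, and it mirrors Uglov's reduction to \emph{asymptotic wedges} alluded to after Theorem~\ref{thmboson}). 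Thus on the $j_0$-asymptotic locus the action of $h_m$, and hence of $s_\sigma$, reduces to the level-one action of the corresponding symmetric function on the single Fock space $\cF_{s_{j_0}}$, tensored with the identity on the other components. Invoking Iijima's theorem (or re-deriving the level-one statement, which is the $\ell=1$ case: $s_\sigma$ acting at $q=0$ on $|\la,s\rangle$ yields $|\la + e\sigma, s\rangle$), I get $\ts_\sigma|\bla,\bs\rangle = |\bmu,\bs\rangle$ with $\bmu^{j} = \bla^j$ for $j\neq j_0$ and $\bmu^{j_0} = \bla^{j_0} + e\sigma$, i.e.\ $\bmu = \bla + e\bsi$ in the notation of the paper — valid provided $\bmu$ is still $j_0$-asymptotic so that no further straightening reintroduces cross-terms.

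\medskip

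\textbf{Step 3: identification with $\tb_\sigma$.} Finally I would match this with Definition~\ref{defbsigma}. A $j_0$-asymptotic charged multipartition is in particular a doubly highest weight vertex (its $\ell$-abacus is totally $e$-periodic since the $j_0$-block is an arbitrary partition far below, while the other blocks are empty cores — I'd cite the relevant statement from \cite{Gerber2016} or \cite{JaconLecouvey2012}), and the partition $\ka|\bla,\bs\rangle$ is exactly $\bla^{j_0}$ (conjugated appropriately). Shifting the $k$-th period of the $\ell$-abacus by $\sigma_k$ to the right corresponds, under $\tau$ and the abacus/Young-diagram dictionary of Section~\ref{abaci}, precisely to adding $e$ times the partition $\sigma$ to $\bla^{j_0}$ — here the factor $e$ comes from the fact that one period spans $e$ positions in the $\ell$-abacus. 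Hence $\tb_\sigma|\bla,\bs\rangle = |\bla + e\bsi,\bs\rangle = \ts_\sigma|\bla,\bs\rangle$, as claimed; since both $\ts_\sigma$ and $\tb_\sigma$ commute with the $\Ue$- and $\Ul$-Kashiwara operators (for $\ts_\sigma$ this uses that the $p_m$ commute with the quantum group action, cf.\ \cite{Uglov1999}), the agreement on the doubly-highest-weight locus forces agreement everywhere in the relevant component.

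\medskip

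\textbf{Main obstacle.} The delicate point is Step~2: controlling the straightening relations to show that all mixed terms between the $j_0$-block and the rest genuinely disappear at $q=0$ under the asymptotic hypothesis, and that the induced map is exactly adding $e\sigma$ rather than some twisted variant. This requires care with Uglov's asymptotic wedge machinery and with the precise normalization of the boson action; I expect the bookkeeping of contents/charges to be the real work, with everything else being formal.
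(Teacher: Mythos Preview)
Your Step~3 contains a genuine error: a $j_0$-asymptotic charged multipartition is \emph{not} in general a doubly highest weight vertex. The asymptotic condition $|s_j - s_{j_0}| \geq |\bla|$ for $j\neq j_0$ says nothing about the other components $\la^j$ being empty, and even when they are, $|\bla,\bs\rangle$ need not be a $\Ue$-highest weight vertex (take $\ell=2$, $e=3$, $\bs=(0,100)$, $\bla=((1),\emptyset)$: this is $1$-asymptotic but has a good removable $0$-box). So your proposed identification $\ts_\sigma = \tb_\sigma$ via Definition~\ref{defbsigma}(1) collapses. The paper closes this gap differently: it observes that the asymptotic formula $\bmu = \bla + e\bsi$ is exactly Losev's formula for the Heisenberg crystal operator (after adjusting for conjugation conventions), and then invokes \cite[Theorem~7.6]{Gerber2016}, which proves that Losev's operators agree with $\tb_\sigma$. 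This is the step you cannot bypass.

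Two smaller points. First, your Step~1 posits a general triangularity of $s_\sigma$ on the canonical basis and extracts $\ts_\sigma$ from $\cL^-/q\cL^-$; the paper does not establish any such triangularity. Instead it uses Iijima's theorem as a black box --- namely that $s_\sigma G^+(\bla,\bs) = G^+(\bmu,\bs)$ \emph{exactly} (no lower-order terms) in the asymptotic, $e$-regular case --- then removes the $e$-regular hypothesis by writing $\bla = \tbla + e\bpi$, and finally \emph{defines} $\ts_\sigma$ on the rest of $B_s$ by forcing commutation with the Kashiwara operators. So part~(1) is obtained as a byproduct of part~(2), not independently. Second, note the canonical basis in play is $G^+$, not $G^-$: Iijima's conventions for level-rank duality are conjugate to the paper's, which swaps $e$-restricted with $e$-regular and lower with upper canonical basis.
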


\begin{rem}
When $\bla=\bemptyset$, this says that $\ts_\si$ shifts the $e$ rightmost beads in the $j_0$-th runner of $\cA(\bla,\bs)$,
and one recovers Losev's result, see for instance \cite[Example 7.3]{Gerber2016}.
\end{rem}

\proof
In a general manner, we can identify a crystal map $B_s\ra B_s$ with an operator on the wedge space $\cF_s$
mapping an element of $\cG^\flat$ to an element of $\cG^\flat$, for $\flat\in\{+,-\}$.
Indeed, an element $|\bla,\bs\rangle\in B_s$ can be identified with $G^\flat(\bla,\bs)$.
Another way to see this is to look at the action on $G^\flat(\bla,\bs)$ and put $q=0$ or $q=\infty$ respectively 
in the resulting vector. This provides the identification because of Condition (2) of Theorem \ref{thmcanonicalbases}.
As a matter of fact, the action of $s_\si$ on a canonical basis element turns out to have the desired form.
Indeed, by \cite[Theorem 4.12]{Iijima2012}, we know that $s_\si :\cF_s\ra\cF_s$ verifies
$$s_\si G^+(\bla,\bs) = G^+(\bmu,\bs)$$
where $\bmu=\bla + e\bsi$ (with $\bsi$ as in the statement of Theorem \ref{thmschur}) provided:
\begin{itemize}
 \item $\bla^j$ is $e$-regular for all $j=1\dots,\ell$
 \item $|\bla,\bs\rangle$ and $|\bmu,\bs\rangle$ are $j_0$-asymptotic.
\end{itemize}
Here, Iijima's original statement has been twisted by conjugation, because the level-rank duality 
used in his result is the reverse of that of the present paper.
Thus, the notion of $e$-restricted multipartition is replaced by $e$-regular.
Accordingly, we use the upper canonical basis instead of the lower one.
This statement can be extended to an arbitrary $j_0$-asymptotic element $G^+(\bla,\bs)\in\cF_s$, that is, 
without the restriction that each $\la^j$ is $e$-regular. To do this,
for all $\bla\in\Pi_\ell$, let $\bla=\tbla+e\bpi$ where $\tbla^j$ is $e$-regular for all $j=1,\dots,\ell$
and $\pi_j=\emptyset$ if $j\neq j_0$ and $\pi_{j_0}=\pi$.
Then, for all $\si\in\Pi$,
$$s_\si G^+(\bla,\bs) = s_{\tau}G^+(\tbla,\bs)$$
where $\tau=(\si'+\pi')'$, which we can compute by the preceding formula.
Note that $\tau$ is the common addition of $\si$ and $\pi$, see Remark \ref{remadd}.
For a general $j_0$-asymptotic vector $|\bla,\bs\rangle$, we write again $G^+(\bmu,\bs)=s_\si G^+(\bla,\bs)$.
As explained in the beginning of the proof, this induces a crystal map $\ts_\si:B_s \ra B_s$,
by additionnaly requiring that $\ts_\si$ commutes with the Kashiwara crystal operators.
In fact, when $|\bla,\bs\rangle$ is $j_0$-asymptotic, the formula for $\ts_\si$
is precisely the formula for the Heisenberg crystal operator given in \cite{Losev2015}
(provided again that one twists by conjugating),
which coincides with $\tb_\si$ by \cite[Theorem 7.6]{Gerber2016}.
This completes the proof.
\endproof

As explained in the proof, with this approach, $\ts_\si|\bla,\bs\rangle$ is identified with $s_\si G^+(\bla,\bs)$.
The map $s_\si$ being an actual operator (on the vector space $\cF_s$), 
this justifies the terminology ``operator'' for the maps $\tb_\si : B_s\ra B_s$,
thereby completing the analogy with the Kashiwara crystal operators.

\section{Explicit description of the Heisenberg crystal}\label{heiscrystal}

In this section, we give the combinatorial formula for computing the Heisenberg crystal in full generality.
This completes the results of \cite{Losev2015}, where the asymptotic case is treated, and of \cite{Gerber2016}
where the case of doubly highest weight vertices (in the level-rank duality) is treated, see Appendix for details.

\subsection{Level $\ell$ vertical strips}

In the spirit of \cite{JMMO1991} and \cite{FLOTW1999}, we will express the action of the Heisenberg crystal operators
in terms of adding/removing certain vertical strips.

\medskip

For a given charged multipartition $|\bla,\bs\rangle$,
we denote 
$$
\sW_1(\bla,\bs) = \left\{ (a,b,j)\in\Z_{>0}\times\Z_{>1}\times\{1,\dots,\ell\}  \; | \; (a,b,j) \notin\bla \mand
(a,b-1,j)\in \bla \right\}
$$
$$
\sW_2(\bla,\bs)=  \left\{ (a,1,j)\in\Z_{>0}\times\{1\}\times\{1,\dots,\ell\}  \; | \; (a,1,j) \notin\bla \right\}
$$
and $\sW(\bla,\bs) = \sW_1(\bla,\bs) \sqcup \sW_2(\bla,\bs)$,
so that $\sW(\bla,\bs)$ is the set of boxes directly to the right of $\bla$ (considering that it has infinitely many parts of size zero).

\begin{defi}\label{defvertstrip}
Let $|\bla,\bs\rangle$ be a charged $\ell$-partition.
\begin{enumerate}
 \item A \textit{(level $\ell$) vertical $e$-strip} is a sequence of $e$ boxes $\ga_1=(a_1,b_1,j_1),\dots,\ga_e=(a_e,b_e,j_e)$ such that no horizontal domino appears,
 i.e.  there is no $1\leq i \leq e$ such that $a_{i+1}=a_i$ and $j_{i+1}=j_i$.
 Moreover, a vertical $e$-strip is called \textit{admissible} if:
\begin{enumerate}
 \item The contents of $\ga_1,\dots,\ga_e$ are consecutive, say $\fc(\ga_i)=\fc(\ga_{i+1})+1$ for all $1\leq i \leq e$.
 \item For all $1\leq i<i' \leq e$, we have ${j_i}\geq {j_{i'}}$.
\end{enumerate}
\item The admissible vertical $e$-strips contained in $\sW(\bla,\bs)$ 
are denoted $\sV(\bla,\bs)$ 
.
The elements of $\sV(\bla,\bs)$ are called the \textit{(admissible) vertical $e$-strips of $|\bla,\bs\rangle$}.
\item A vertical $e$-strip $X$ of $|\bla,\bs\rangle$ is called \textit{addable} 
if $X\cap \bla = \emptyset$ and $\bla \sqcup X$ is still an $\ell$-partition 
.
\end{enumerate}
\end{defi}

\begin{rem}\label{remverticalstrip}
This is a generalisation to multipartitions of the usual notion of vertical strips, see for instance \cite[Chapter I]{Macdonald1998}.
\end{rem}

\begin{exa}\label{exaverticalstrip}
Let $\ell=3$, $e=4$ and 
$$|\bla,\bs\rangle=|(4.2,2,2^2.1^2),(1,4,6)\rangle={\tiny\left(\,\young(1234,01) \, , \, \young(45) \, \, , \, \young(67,56,4,3) \, \right) }.$$
Then 
$\sV(\bla,\bs)$ consists of 
\begin{center}
\begin{tabular}{ccl}
 $X_{1}=( (1,3,3) , (2,3,3) ,(1,3,2), (1,5,1) )$ & with respective contents & $8,7,6,5$ \, ,  \\
 $X_{2}=( (3,2,3) , (4,2,3), (2,1,2), (2,3,1) )$ & with respective contents & $5,4,3,2$ \, ,\\
 $X_{3}=( (3,2,3) , (4,2,3), (2,1,2), (3,1,2) )$ & with respective contents & $5,4,3,2$ \, ,\\
 
 $X_{4}=( (4,2,3), (2,1,2), (3,1,2), (4,1,2) )$ & with respective contents & $4,3,2,1$ \, ,\\
 $X_{5}=( (2,1,2) , (3,1,2), (4,1,2), (5,1,2) )$ & with respective contents & $3,2,1,0$ \, ,\\
 
 $X_{6}=( (3,1,2) , (4,1,2), (5,1,2), (3,1,1) )$ & with respective contents & $2,1,0,-1 $ \, ,\\
 $X_{7}=( (3,1,2) , (4,1,2), (5,1,2), (6,1,2) )$ & with respective contents & $2,1,0,-1 $ \, ,\\ 
 $X_{8}=( (5,1,3) , (4,1,2), (5,1,2), (3,1,1) )$ & with respective contents & $2,1,0,-1 $ \, ,\\
 $X_{9}=( (5,1,3) , (4,1,2), (5,1,2), (6,1,2) )$ & with respective contents & $2,1,0,-1 $ \, ,\\
 $X_{10}=( (5,1,3) , (6,1,3), (5,1,2), (3,1,1) )$ & with respective contents & $2,1,0,-1 $ \, ,\\  
 $X_{11}=( (5,1,3) , (6,1,3), (5,1,2), (6,1,2) )$ & with respective contents & $2,1,0,-1 $ \, ,\\  
 $X_{12}=( (5,1,3) , (6,1,3), (7,1,3), (3,1,1) )$ & with respective contents & $2,1,0,-1 $ \, ,\\  
 $X_{13}=( (5,1,3) , (6,1,3), (7,1,3), (6,1,2) )$ & with respective contents & $2,1,0,-1 $ \, ,\\
 $X_{14}=( (5,1,3) , (6,1,3), (7,1,3), (8,1,3) )$ & with respective contents & $2,1,0,-1 $ \, ,\\
\end{tabular}
\end{center}
and so on.
\end{exa}

\subsection{Action of the Heisenberg crystal operators $\tb_{\si}$}

We will define an order on the vertical $e$-strips of a given charged multipartition.
First, 
let $\ga=(a,b,j)$ and $\ga'=(a',b',j')$ be two boxes of $|\bla,\bs\rangle$.
Write $\ga>\ga'$ if
$\fc(\ga)>\fc(\ga')$ or $\fc(\ga)=\fc(\ga')$ and $j<j'$.

\begin{rem}
Note that this is the total order used to define the good boxes in a charged $\ell$-partition,
which characterises the Kashiwara crystals, see \cite[Chapter 6]{GeckJacon2011}.
\end{rem}

By extension, let $>$ denote the lexicographic order induced by $>$ on $e$-tuples of boxes in a given charged $\ell$-partition.
This restricts to a total order on $\sV(\bla,\bs)$.

\begin{defi}\label{defgoodverticalstrip}
Let $|\bla,\bs\rangle$ be a charged $\ell$-partition. Denote simply $\sV=\sV(\bla,\bs)$.
\begin{itemize}
\item 
The first good vertical $e$-strip of $|\bla,\bs\rangle$ 
is the maximal element $X_1$ of $\sV$ with respect to $>$.

\item Let $k\geq 2$. The $k$-th good vertical $e$-strip of $|\bla,\bs\rangle$ 
is the maximal element $X_k$ of 
$$\left\{ \, X\in\sV \, | \, X_{k-1}>X\mand X_{k-1}\cap X=\emptyset \, \right\} $$ 
with respect to $>$.
\end{itemize}
\end{defi}

In other terms, the greatest (with respect to $>$) vertical strip of $|\bla,\bs\rangle$ is good, and
another admissible vertical strip is good except if one of its boxes already belongs to a previous good vertical strip.

\begin{exa}
We go back to Example \ref{exaverticalstrip}.
Then we have $X_k>A_{k-1}$ for all $k=1,\dots,14$.
Moreover, 
there are only four good addable vertical strips among these, 
namely $X_1$, $X_2$, $X_6$ and $X_{13}$.
\end{exa}

\begin{defi}\label{deftc} Let $\si=(\si_1,\si_2\dots)\in\Pi$.
Define $\tc_\si : B_s \ra  B_s$ 
$$\tc_\si |\bla,\bs\rangle = |\bmu,\bs\rangle 
$$
where $\bmu$ 
is obtained from $\bla$ by adding 
recursively $\si_k$ times the $k$-th good vertical $e$-strip for $k\geq1$.
\end{defi}

\begin{lem}\label{lemheis1}
The map $\tc_{(1)}$ is well defined.
\end{lem}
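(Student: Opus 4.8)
The statement that $\tc_{(1)}$ is well defined amounts to two things: first, that the set $\sV(\bla,\bs)$ is nonempty so that a maximal element $X_1$ exists, and second, that this $X_1$ — the first good vertical $e$-strip — is genuinely \emph{addable}, so that $\bla\sqcup X_1$ is again an $\ell$-partition and $\tc_{(1)}|\bla,\bs\rangle$ lands in $B_s$. (Since $\si=(1)$ has only one nonzero part, no recursion is involved: we only add the first good vertical $e$-strip once.) So the plan is to produce a canonical admissible vertical $e$-strip in $\sW(\bla,\bs)$ and show the maximal one is addable.

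First I would pass to the abacus picture. On the $\ell$-abacus $\cA(\bla,\bs)$, the set $\sW(\bla,\bs)$ of boxes directly to the right of $\bla$ corresponds bijectively to the ``gaps directly left of a bead'' — i.e. pairs (empty position, bead immediately to its right) on each runner — together with, via the conventions of Section~\ref{abaci}, the positions accounting for $\sW_2$. Under $\tau^{-1}$, translating the whole $\ell$-abacus into the $1$-abacus $\cA(\la,s)$, an admissible vertical $e$-strip with consecutive contents $c, c-1, \dots, c-e+1$ corresponds to a block of $e$ consecutive content values, and the condition ``no horizontal domino'' plus ``$j_i\geq j_{i'}$'' is exactly what is needed for the corresponding move on the $1$-abacus to be the elementary shift $u_\bbe\mapsto$ (reorder of) $u_{\be_1}\w\cdots\w u_{\be_{k}-e\ell}\w\cdots$ appearing in Theorem~\ref{thmboson} for $m=1$ — equivalently, to a single bead of $\cA(\la,s)$ moving $e$ steps to the left past no bead. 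I would make this correspondence precise and then simply observe that such a move always exists: take the rightmost bead of $\cA(\la,s)$ that can move $e$ steps left into an empty slot (this exists because the abacus has a leftmost bead configuration eventually equal to the charged-$s$ vacuum, so moving the appropriate bead $e$ places left is unobstructed). This shows $\sV(\bla,\bs)\neq\emptyset$.

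Next, for the maximality and addability of $X_1$: the order $>$ on $\sV(\bla,\bs)$ restricts (via the identification above) to an order on the corresponding bead-moves, and the maximal element $X_1$ corresponds to the move of largest initial content, which is precisely the bead-move on $\cA(\la,s)$ that is ``furthest right''. For this extremal strip, every box $\ga_i=(a_i,b_i,j_i)$ of $X_1$ satisfies $\ga_i\notin\bla$ and $(a_i,b_i-1,j_i)\in\bla$ or $b_i=1$ (membership in $\sW$), and the no-domino/ordering conditions guarantee $\bla\sqcup X_1$ still has weakly decreasing rows in each component, i.e. is an $\ell$-partition; the consecutive-contents condition together with the extremality forces $X_1\cap\bla=\emptyset$. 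Hence $X_1$ is addable in the sense of Definition~\ref{defvertstrip}(3), and $\tc_{(1)}|\bla,\bs\rangle=|\bla\sqcup X_1,\bs\rangle\in B_\bs\subset B_s$ is unambiguously defined.

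\textbf{Main obstacle.} The routine part is the abacus bookkeeping; the genuinely delicate point is verifying that the \emph{maximal} admissible vertical $e$-strip is always addable — a priori the maximal element of $\sV(\bla,\bs)$ could overlap $\bla$ or fail the column-convexity needed for $\bla\sqcup X_1$ to be a multipartition. I expect to handle this by showing that any admissible vertical $e$-strip whose top box has maximal content among all of $\sV(\bla,\bs)$ must consist entirely of boxes in $\sW(\bla,\bs)$ that are pairwise in distinct rows-within-a-component or strictly staircase-shaped, so that adjoining it cannot create an illegal Young diagram; the key leverage is condition (b) ($j_i\geq j_{i'}$) combined with the strictly decreasing contents, which pins down the shape of an extremal strip almost uniquely. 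Translating this to ``the rightmost movable bead moves freely'' on the $1$-abacus is what makes it clean.
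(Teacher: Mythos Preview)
You correctly isolate addability of $X_1$ as the substantive point, but two of your preliminaries are non-issues: $\sV(\bla,\bs)\neq\emptyset$ is immediate (take $(a,1,j),\dots,(a+e-1,1,j)\in\sW_2$ for any fixed $j$ and large $a$), and $X_1\cap\bla=\emptyset$ is automatic since every box of $\sW(\bla,\bs)$ lies outside $\bla$ by definition --- there is nothing for ``extremality'' to force there. Your assertion that ``the no-domino/ordering conditions guarantee $\bla\sqcup X_1$ still has weakly decreasing rows'' is false as stated: admissibility alone does \emph{not} imply addability (non-maximal elements of $\sV$ can easily fail to be addable), so maximality must genuinely enter, and your plan for using it remains an expectation (``I expect to handle this by\dots'') rather than an argument.

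The paper's proof is a direct three-line contradiction on the Young diagram, with no abaci and no bosons. If $X=(\ga_1,\dots,\ga_e)$ is not addable, some $\ga_i=(a,b,j)$ has $(a-1,b,j)\notin X\sqcup\bla$; since $(a,b,j)\in\sW$, the box $(a-1,b,j)$ also lies in $\sW$. Maximality of $X$ then forces $(a-1,b,j)\in X$, a contradiction. Unpacking that implication: if $i=1$, prepending $(a-1,b,j)$ to $(\ga_1,\dots,\ga_{e-1})$ gives a strictly larger element of $\sV$; if $i>1$ with $j_{i-1}>j$, replacing $\ga_{i-1}$ by $(a-1,b,j)$ does the same; and if $j_{i-1}=j$ then $\ga_{i-1}=(a-1,b,j)$ already, because in a fixed component the boxes of $\sW$ have pairwise distinct contents (the map $a\mapsto\la^j_a+1-a$ is strictly decreasing). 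That last observation is precisely the Young-diagram shadow of your ``rightmost movable bead moves freely'', and it is all the leverage needed --- the translation to $1$-abaci and the boson $p_1$ buys nothing for this lemma.
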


\proof
Let $|\bla,\bs\rangle$ be a charged multipartition.
We need to prove that the first good vertical strip $X$ of $|\bla,\bs\rangle$ is addable.
Assuming it is not the case, then there exists a box $(a,b,j)\in X$
such that $(a-1,b,j)\notin X\sqcup\bla$ and $(a-1,b-1,j)\in\bla$.
But $(a-1,b,j)\notin \bla$ and $(a-1,b-1,j)\in\bla$ implies $(a-1,b,j)\in X$,
whence a contradiction.
\endproof

\begin{cor}\label{corheis1}
For all $\si\in\Pi$, the map $\tc_\si$ is well defined.
\end{cor}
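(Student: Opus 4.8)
The plan is to reduce Corollary~\ref{corheis1} to Lemma~\ref{lemheis1} by induction on the number of boxes of $\si$, using the recursive definition of $\tc_\si$ in Definition~\ref{deftc}. The key point is that adding the $k$-th good vertical $e$-strip $\si_k$ times, for each $k$, produces a genuine charged $\ell$-partition at every intermediate stage, so the map $B_s\to B_s$ is well defined (and in particular independent of any choices, since the good vertical strips are defined as maxima for the total order $>$ on $\sV(\bla,\bs)$, hence unique at each step).

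First I would observe that, by the very construction, $\tc_\si|\bla,\bs\rangle$ is obtained by a finite sequence of elementary moves, each of which consists of adding one good vertical $e$-strip to the current charged multipartition. So it suffices to show: whenever $|\bnu,\bs\rangle$ is a charged $\ell$-partition and $X$ is a good vertical $e$-strip of $|\bnu,\bs\rangle$ that is required to be added (i.e.\ it appears among the first good vertical strips and the corresponding $\si_k$ is positive), then $X$ is addable and $\bnu\sqcup X$ is again an $\ell$-partition. Lemma~\ref{lemheis1} gives exactly this for the \emph{first} good vertical strip. Thus I would argue that the same proof applies verbatim to the $k$-th good vertical strip: the only property of ``first'' used in the proof of Lemma~\ref{lemheis1} is that $X\in\sV(\bnu,\bs)$, and the contradiction there --- from $(a,b,j)\in X$ with $(a-1,b,j)\notin X\sqcup\bnu$ and $(a-1,b-1,j)\in\bnu$ one forces $(a-1,b,j)\in X$ by the definition of $\sW(\bnu,\bs)$ and admissibility --- is internal to the strip $X$ and does not refer to maximality. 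Hence every good vertical $e$-strip of any charged $\ell$-partition is addable, and the recursion in Definition~\ref{deftc} never leaves the set of $\ell$-partitions.

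Then I would package this as a clean induction: for $|\si|=0$ the map is the identity; assuming $\tc_{\si'}$ is well defined for all $\si'$ with $|\si'|<|\si|$, pick the largest $k$ with $\si_k>0$, write the action as first applying $\tc_{\si''}$ (where $\si''$ is obtained from $\si$ by decreasing $\si_k$ by one) and then adding the $k$-th good vertical $e$-strip of the resulting charged multipartition once more. By the induction hypothesis the intermediate object is a bona fide charged $\ell$-partition, and by the addability statement above the final step again yields one; uniqueness at each stage follows from $>$ being a total order on the relevant (finite, once we restrict to strips meeting a bounded region) subset of $\sV$. This shows $\tc_\si$ is well defined as a map $B_s\to B_s$.

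The main obstacle I anticipate is making the addability argument uniform for the $k$-th strip, i.e.\ confirming that nothing in Definition~\ref{defgoodverticalstrip} beyond membership in $\sV$ is needed --- in particular that the disjointness condition $X_{k-1}\cap X_k=\emptyset$ and the fact that earlier strips have already been added do not spoil the local argument of Lemma~\ref{lemheis1}. One should check that after adding $X_1,\dots,X_{k-1}$ the set $\sW$ of the new charged multipartition still contains $X_k$ as an admissible vertical strip (this is essentially built into the definition, since the $X_i$ are pairwise disjoint and the order $>$ respects contents), so that Lemma~\ref{lemheis1}'s proof transfers. Since this is a routine verification and the heart of the matter is already in Lemma~\ref{lemheis1}, the corollary should follow quickly:

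\proof
We argue by induction on $|\si|$, the result being trivial when $\si=\emptyset$ since $\tc_\emptyset$ is the identity. Let $\si\in\Pi$ with $|\si|>0$ and assume $\tc_{\si'}$ is well defined for every $\si'\in\Pi$ with $|\si'|<|\si|$. Let $k$ be the largest index with $\si_k>0$ and let $\si''$ be the partition obtained from $\si$ by replacing $\si_k$ with $\si_k-1$. By Definition~\ref{deftc}, $\tc_\si|\bla,\bs\rangle$ is obtained by first applying $\tc_{\si''}|\bla,\bs\rangle=|\bnu,\bs\rangle$ (well defined by induction, and a genuine charged $\ell$-partition) and then adding once more the $k$-th good vertical $e$-strip $X$ of $|\bnu,\bs\rangle$. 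It remains to see that $X$ is addable, i.e.\ that $\bnu\sqcup X$ is an $\ell$-partition. But the proof of Lemma~\ref{lemheis1} only uses that $X\in\sV(\bnu,\bs)$: if $X$ were not addable there would be a box $(a,b,j)\in X$ with $(a-1,b,j)\notin X\sqcup\bnu$ and $(a-1,b-1,j)\in\bnu$; then $(a-1,b,j)\notin\bnu$ together with $(a-1,b-1,j)\in\bnu$ forces $(a-1,b,j)\in\sW(\bnu,\bs)$, and by admissibility of $X$ (consecutive contents, decreasing components) $(a-1,b,j)\in X$, a contradiction. Hence $X$ is addable and $\tc_\si|\bla,\bs\rangle$ is a well-defined charged $\ell$-partition; uniqueness at each step follows from $>$ being a total order on $\sV$. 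This completes the induction.
\endproof
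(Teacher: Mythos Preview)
There is a genuine gap in your argument. You assert that the proof of Lemma~\ref{lemheis1} ``only uses that $X\in\sV(\bnu,\bs)$'', and in your formal proof you write that ``by admissibility of $X$ (consecutive contents, decreasing components) $(a-1,b,j)\in X$''. This implication is false. Admissibility only guarantees that, if $(a,b,j)=\ga_i$ with $i>1$, there is \emph{some} box $\ga_{i-1}\in X$ of content $\fc((a,b,j))+1$ lying in a component $j_{i-1}\geq j$; nothing forces $\ga_{i-1}$ to equal $(a-1,b,j)$, and if $(a,b,j)=\ga_1$ there is no such box at all. In the paper's proof of Lemma~\ref{lemheis1} this step goes through precisely because $X$ is the \emph{maximal} element of $\sV(\bla,\bs)$: if $\ga_{i-1}$ lay in a strictly larger component one could replace it by $(a-1,b,j)$ and obtain a lexicographically larger admissible strip, and if $(a,b,j)=\ga_1$ one could prepend $(a-1,b,j)$; both contradict maximality. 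So maximality is essential, not incidental.

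A small counterexample shows that an arbitrary $X\in\sV$ need not be addable. Take $\ell=2$, $e=2$, $\bs=(0,0)$ and $\bla=(\emptyset,(1))$. Then $X_1=((1,2,2),(1,1,1))$ and the second good strip is $X_2=((2,1,1),(3,1,1))$, which is \emph{not} addable to $|\bla,\bs\rangle$ since $(1,1,1)\notin\bla$ and $(1,1,1)\notin X_2$. Thus your claim that ``every good vertical $e$-strip of any charged $\ell$-partition is addable'' fails already for $k=2$. The paper's proof avoids this by a different mechanism: it shows that the second good strip of $\tc_{(\si_1)}|\bla,\bs\rangle$ is addable \emph{because} $\si_1\geq 1$, i.e.\ because the first good strip has already been added at least once, which forces $(a-1,b,j)$ to lie in the current multipartition for every box $(a,b,j)$ of the second strip. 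Your induction, which peels off the last box of $\si$, could in principle be made to work along the same lines, but the justification you give for the key step is incorrect and must be replaced by an argument that genuinely uses the fact that the earlier good strips have already been added.
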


\proof
If $\si=\emptyset$, then $\tc_\si=\id$ and so is well defined. 
So we assume $\si\neq\emptyset$, say $\si=(\si_1,\si_2,\dots)$.
First of all, Lemma \ref{lemheis1} and the definition of $\tc_{\si}$ implies that
$\tc_{(n)}$ is well defined for all $n\in\Z_{\geq1}$ (so in particular for $n=\si_1$).
It remains to prove that for all $\bla\in\Pi_\ell$ and for all $\ell$-charge $\bs$,
the second good vertical $e$-strip $X$ of $\tc_{(\si_1)}|\bla,\bs\rangle$ is addable.
Since $\si_1\geq1$, it is clear that for all $(a,b,j)\in X$,
$(a-1,b,j)$ is actually a box of $\bla$,
since if it was not, $X$ could not be the second good vertical strip of $\bla$
exactly as in the proof of Lemma \ref{lemheis1}.
Iterating, for all partition $\tau$, the $(h+1)$-th vertical strip of $\tc_\tau|\bla,\bs\rangle$
is addable, where $h$ is the number of non-zero parts in $\tau$.
\endproof

\ytableausetup{boxsize=1.2mm}

\begin{exa}
Take again the values of Example \ref{exaverticalstrip}.
Then, for $\si=(2.1^3)$, we have
$$
\tc_\si|\bla,\bs\rangle = 
\tc_{\Yboxdim{12pt}
\begin{ytableau}
*(Apricot) & *(NavyBlue)\\
*(YellowGreen)\\
*(Goldenrod) \\
*(Gray) 
\end{ytableau}
}
\,
{\tiny\left(\,\young(1234,01) \, , \, \young(45) \, \, , \, \young(67,56,4,3) \, \right) }
=
\ytableausetup{boxsize=3.1mm}
\left( \,
\scalebox{1}{
{\tiny
\begin{ytableau}
1 & 2 & 3 & 4 & *(Apricot) 5 & *(NavyBlue) 6 \\
0 & 1 & *(YellowGreen) 2  \\
*(Goldenrod) \text{-}1 \\
\end{ytableau}
}}
\, , \, 
\scalebox{1}{
{\tiny
\begin{ytableau}
4 & 5 & *(Apricot) 6 & *(NavyBlue) 7 \\
*(YellowGreen) 3  \\
*(Goldenrod) 2 \\
*(Goldenrod) 1 \\
*(Goldenrod) 0 \\
*(Gray) \text{-}1 \\
\end{ytableau}
}}
\, , \,
\scalebox{1}{
{\tiny
\begin{ytableau}
6 & 7 & *(Apricot) 8 & *(NavyBlue) 9 \\
5 & 6 & *(Apricot) 7 & *(NavyBlue) 8 \\
4 & *(YellowGreen) 5  \\
3 & *(YellowGreen) 4  \\
*(Gray) 2 \\
*(Gray) 1 \\
*(Gray) 0 \\
\end{ytableau}
}}
\, \right).
$$
Each box of $\si$ corresponds to a vertical $e$-strip, the matching being given by the colors.
\end{exa}

We will prove the following theorem.

\begin{thm}\label{thmheis}
For all $\si\in\Pi$, we have 
$$ \tc_\si =\tb_\si.
$$
\end{thm}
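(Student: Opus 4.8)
The strategy is to reduce the general identity $\tc_\si=\tb_\si$ to the two defining properties of $\tb_\si$ in Definition~\ref{defbsigma}: namely, that it acts in the prescribed way on doubly highest weight vertices, and that it commutes with the Kashiwara crystal operators of both $\Ue$ and $\Ul$. Since $\tb_\si$ is \emph{uniquely} determined by these two conditions, it suffices to verify that $\tc_\si$ also satisfies them. For property~(1), I would take a doubly highest weight vertex $|\bla,\bs\rangle$, which by the result of Jacon--Lecouvey recalled after Definition~\ref{defdhw} has a totally $e$-periodic $\ell$-abacus; the partition $\ka=\ka|\bla,\bs\rangle$ records the part sizes carried by the periods. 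I would then show that the good vertical $e$-strips of such a vertex are exactly the $e$-periods of the abacus read in decreasing order, and that adding the $k$-th good vertical $e$-strip $\si_k$ times corresponds precisely to shifting the $k$-th period $\si_k$ steps to the right. This is a direct translation between the abacus picture (beads of a period moving right) and the Young diagram picture (a consecutive-content, column-strict strip of $e$ boxes being glued on), combined with the observation that the total order $>$ on strips matches the ordering of periods by the content of their rightmost bead.

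For property~(2), I would need to prove that $\tc_\si$ commutes with the Kashiwara operators $\te_i,\tf_i$ for $\Ue$ and with $\te_i',\tf_i'$ for $\Ul$. The cleanest route is probably to use Theorem~\ref{thmschur}: in the $j_0$-asymptotic situation, $\tc_\si$ and $\tb_\si$ must agree because in that regime the good vertical $e$-strips degenerate to the simple ``add $e\si$ to the $j_0$-th component'' rule (one checks from Definition~\ref{defgoodverticalstrip} that when the charges are spread far apart, the $k$-th good strip is forced to lie entirely in runner $j_0$ and adds a single row of the relevant shape), and Theorem~\ref{thmschur}(2) identifies that with $\tb_\si$. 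Then, because every connected component of a crystal contains asymptotic vertices after applying suitable $\Ue\times\Ul$ crystal operators (this is the standard device: one can translate any vertex into the asymptotic chamber), the commutation with Kashiwara operators extends the equality $\tc_\si=\tb_\si$ from the asymptotic locus to all of $B_s$ — provided one independently establishes that $\tc_\si$ itself commutes with the Kashiwara operators.

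That last point — proving $\tc_\si$ commutes with $\te_i,\tf_i$ and $\te_i',\tf_i'$ — is what I expect to be the main obstacle. The natural approach is a direct combinatorial argument: show that the operation ``add the first good vertical $e$-strip'' commutes with adding/removing a good $i$-box, by analysing how the good box of $\bla$ and the good box of $\tc_{(1)}|\bla,\bs\rangle$ are related. The interaction is delicate because adding a vertical $e$-strip can change which boxes are addable/removable and can shift the ``good box'' computation; one must track the signature sequences carefully. An alternative, possibly shorter, route is to invoke the characterisation mentioned in the proof of Proposition~\ref{propHcrystal}: the $\cH$-crystal is the unique family of operators compatible with the $\Uinf$-crystal on $\Pi$ under the $\ka$-bijection, so it would suffice to show $\tc_\si$ induces, via $\ka$, the operator ``$\si\mapsto$ standard $\Uinf$-action'' on partitions. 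Either way, the commutation with the \emph{two} towers of Kashiwara operators (both $\Ue$ and the level-rank dual $\Ul$) is the technical heart; the duality symmetry — exchanging $e\leftrightarrow\ell$, rows $\leftrightarrow$ columns, which swaps vertical $e$-strips for horizontal $\ell$-strips on the dual side — should be exploited to get the $\Ul$-commutation for free once the $\Ue$-commutation is done.

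Finally, I would assemble these pieces: $\tc_\si$ is well defined (Corollary~\ref{corheis1}), it acts correctly on doubly highest weight vertices (property~(1)), and it commutes with all Kashiwara operators (property~(2)); by the uniqueness clause of Definition~\ref{defbsigma}, $\tc_\si=\tb_\si$ for every $\si\in\Pi$, and restricting to $\si=(1)$ recovers the statement for the crystal graph $\sC_s$ in Definition~\ref{defHcrystal}.
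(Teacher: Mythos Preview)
Your plan is correct and essentially matches the paper's approach: both verify that $\tc_\si$ satisfies the two defining properties of $\tb_\si$ in Definition~\ref{defbsigma} (explicit formula on doubly highest weight vertices; commutation with both towers of Kashiwara operators) and then invoke uniqueness. Your treatment of property~(1) is exactly Proposition~\ref{propheis1}.

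There are two organisational differences worth noting. First, the paper does not prove \emph{full} commutation of $\tc_\si$ with the $\Ul$-operators, only commutation along a path of $\Ul$-operators starting from a doubly highest weight vertex (Proposition~\ref{propheis2}). Along such a path the $\ell$-abacus remains totally $e$-periodic, so the good vertical $e$-strips are literally the $e$-periods, and one can argue directly on the abacus that moving the relevant bead (the good $j_r$-box for $\Ul$) preserves each period. This is strictly easier than the general commutation you envisage, and it suffices because it already carries $\tc_\si=\tb_\si$ from doubly highest weight vertices to all $\Ue$-highest weight vertices. Second, the $\Ue$-commutation is then done for arbitrary vertices by observing that condition~(a) of Definition~\ref{defvertstrip} (consecutive contents) forces each good vertical $e$-strip to contain exactly one box of each residue, so adding it does not alter the reduced $i$-word; this is a one-line argument rather than a delicate signature analysis. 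Your detour through the asymptotic case via Theorem~\ref{thmschur} is unnecessary once these two commutations are in hand, as you yourself note. Finally, the level-rank shortcut you propose (getting $\Ul$-commutation from $\Ue$-commutation by duality) is not used in the paper and is not immediate: $\tc_\si$ is defined intrinsically on the $\ell$-partition side, and one would first have to establish that it has the expected dual description on the $e$-partition side, which is not obvious a priori.
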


\subsection{Proof of Theorem \ref{thmheis}}

The strategy for proving this result consists in starting
from the doubly highest weight case, in which we know an explicit formula for $\tb_{\si}$.
Then, we show that the $\tc_{\pm\si}$ commute with the Kashiwara crystal operators for $\Ul$, then $\Ue$,
and use the commutation of the Kashiwara crystals and the $\cH$-crystal (see Definition \ref{defbsigma}) to conclude.

\begin{prop}\label{propheis1}
Let $|\bla,\bs\rangle$ be a doubly highest weight vertex.
Then, for all $\si\in\Pi$, we have
$$ 
\tc_\si|\bla,\bs\rangle =\tb_\si|\bla,\bs\rangle.
$$
\end{prop}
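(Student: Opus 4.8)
The statement to prove is Proposition \ref{propheis1}: for a doubly highest weight vertex $|\bla,\bs\rangle$ and any $\si\in\Pi$, the combinatorial operator $\tc_\si$ agrees with the Heisenberg crystal operator $\tb_\si$. By Definition \ref{defbsigma}(1), $\tb_\si|\bla,\bs\rangle$ is obtained by shifting the $k$-th $e$-period of the $\ell$-abacus $\cA(\bla,\bs)$ to the right by $\si_k$ steps for each $k\geq 1$. So the plan is to show that, \emph{on a totally $e$-periodic $\ell$-abacus}, shifting the $k$-th $e$-period one step to the right is exactly the operation of adding the $k$-th good level-$\ell$ vertical $e$-strip of $|\bla,\bs\rangle$, and that these two descriptions of the iteration match term by term.

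First I would recall (from \cite{JaconLecouvey2012}, cited in the excerpt after Definition \ref{defdhw}) the precise shape of the abacus of a doubly highest weight vertex: it is totally $e$-periodic, meaning the bead configuration is invariant under the shift by $e$ composed with the appropriate runner permutation, and each of the $e$ beads in a given period encodes the same part size $\ka_k$. The key dictionary step is then purely local: translate "shift the $k$-th $e$-period one step right" through the identification $\cA(\bla,\bs)\overset{\tau}{\leftrightarrow}\cA(\la,s)\leftrightarrow$ Young diagram of $\bla$. Because a period consists of $e$ beads lying in $e$ distinct runners with consecutive "$\tau^{-1}$-labels", moving the whole period one notch corresponds on the Young diagram side to adding $e$ boxes whose contents are consecutive integers and which form a vertical strip in the multipartition sense (no two in the same row of the same component) — and the runner-ordering constraint forces exactly condition (b) of Definition \ref{defvertstrip}, namely $j_i\geq j_{i'}$ for $i<i'$. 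Hence each period-shift adds an admissible vertical $e$-strip; conversely I must check that for a doubly highest weight vertex the admissible strip thus produced is the \emph{maximal} one in $\sV(\bla,\bs)$ with respect to the order $>$, i.e. it is the first good vertical strip. This is where total $e$-periodicity is used essentially: the periodicity pins down which beads are "at the top" and rules out competing strips of larger content.

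Next I would set up the recursion. After shifting the first period once, the resulting abacus is no longer that of a doubly highest weight vertex in general, but I claim it is still "good enough": the second period is unaffected and is still totally available, so the second good vertical strip of $\tc_{(\si_1)}|\bla,\bs\rangle$ is precisely the image of the second period, and shifting it once more corresponds to adding it again. Formally, I would prove by induction on $k$ (and within that, on $\si_k$) that after performing the first $k-1$ period-shifts fully and $t$ steps of the $k$-th, the $(k+1)$-st good vertical strip of the current multipartition is still the image of the $(k+1)$-st period; Corollary \ref{corheis1} already guarantees the relevant strips are addable, so the recursion in Definition \ref{deftc} makes sense and produces exactly the multipartition whose abacus has the $k$-th period shifted by $\si_k$. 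Matching this against Definition \ref{defbsigma}(1) gives $\tc_\si|\bla,\bs\rangle=\tb_\si|\bla,\bs\rangle$.

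The main obstacle I anticipate is the combinatorial bookkeeping in the identification "$e$-period $\leftrightarrow$ good vertical $e$-strip": one must carefully translate the $\tau$, $\tau^{-1}$ formulas of Section \ref{abaci} so that the content-consecutiveness (a) and the runner-monotonicity (b) of admissibility come out on the nose, and — more delicately — show that among \emph{all} admissible strips in $\sW(\bla,\bs)$ the one coming from the top of the $k$-th period is the $>$-maximal choice compatible with the previously chosen strips. I expect this to require the full strength of the description of doubly highest weight abaci (total $e$-periodicity \emph{and} total $\ell$-periodicity, or at least the former), and a short argument that no admissible vertical $e$-strip can "jump over" a period boundary in a way that would make it larger. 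Once that local picture is nailed down, the rest is a clean double induction.
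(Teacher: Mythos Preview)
Your plan is correct and follows the same approach as the paper: translate the abacus description of $\tb_\si$ (shifting the $k$-th $e$-period by $\si_k$ steps) into the Young-diagram description and observe that this is precisely adding the $k$-th good vertical $e$-strip $\si_k$ times. The paper's proof is considerably terser than your outline --- it simply asserts the correspondence ``$e$-period $\leftrightarrow$ good vertical $e$-strip'' and that shifting corresponds to adding --- so your planned verification of admissibility conditions (a) and (b) and of $>$-maximality is more than the paper itself spells out.

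One small simplification: you do not need the maps $\tau,\tau^{-1}$ of Section~\ref{abaci} for the dictionary step. The relevant correspondence is the \emph{direct} one between $\cA(\bla,\bs)$ and the charged $\ell$-partition $|\bla,\bs\rangle$ (a bead at $(j,d)$ encodes a row of $\la^j$ ending at content $d$; moving it one step right adds a box of content $d+1$ in component $j$). The $e$-period of Jacon--Lecouvey is already defined on the $\ell$-abacus, so the consecutive-content condition (a) and the runner-monotonicity condition (b) drop out immediately from the definition of an $e$-period, without passing through the level-$1$ abacus.
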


\proof
We need to translate the explicit formula for $\tb_{\si}$, given in terms of abaci, in terms of $\ell$-partitions.
By the correspondence given in Section \ref{mpwedge},
an $e$-period  in the $\ell$-abacus $\cA=\cA(\bla,\bs)$ (see \cite[Section 2.3]{JaconLecouvey2012})
corresponds to a good vertical $e$-strip in $|\bla,\bs\rangle$.
Therefore, it yields
an addable admissible vertical $e$-strip if $(j_1,d_1+1)\notin\cA$ where $(j_1,d_1)$ is the first bead of the period.
Thus, shifting the $k$-th $e$-period of 
$\cA$ by $\si_k$ steps to the right 
amounts to adding 
the $k$-th good vertical strip of $|\bla,\bs\rangle$.
In other terms $\tb_{\si}$ is the same as $\tc_{\si}$ when restricted to doubly highest weight vertices.
\endproof

\begin{prop}\label{propheis2}
Let $|\bla,\bs\rangle$ be a highest weight vertex for $\Ue$.
Then, for all $\si\in\Pi$, we have
$$ 
\tc_\si|\bla,\bs\rangle =\tb_\si|\bla,\bs\rangle.
$$
\end{prop}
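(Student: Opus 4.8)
The plan is to reduce the statement for $\Ue$-highest weight vertices to the already-established doubly-highest-weight case (Proposition \ref{propheis1}) by propagating along the $\Ul$-crystal. The key tool is that, by Definition \ref{defbsigma}(2), the Heisenberg operators $\tb_{\pm\si}$ commute with the Kashiwara crystal operators of $\Ul$; so if I can show the combinatorial operators $\tc_{\pm\si}$ also commute with the $\Ul$-crystal operators $\tilde e_i^{(\ell)}, \tilde f_i^{(\ell)}$, then both $\tb_\si$ and $\tc_\si$ are determined on the whole $\Ul$-connected component by their (common, by Proposition \ref{propheis1}) value at the doubly highest weight source of that component. Since every $\Ue$-highest weight vertex lies in some $\Ul$-connected component whose source is doubly highest weight, this gives the result.

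First I would recall, via the level-rank duality of the Appendix, how the $\Ul$-crystal acts on $|\bla,\bs\rangle$ when the latter is identified with its level-rank dual charged $e$-partition: the $\Ul$-Kashiwara operators add/remove good boxes with respect to the $\ell$-quotient/abacus data on the $e$-side, equivalently they act by certain moves of single beads on the $\ell$-periodic $e$-abacus. The central combinatorial claim I need is: \emph{adding a good level $\ell$ vertical $e$-strip commutes with the $\Ul$-crystal operators}. The natural way to see this is to translate both operations onto the abacus picture used in Section \ref{abaci}. A good vertical $e$-strip of $|\bla,\bs\rangle$ corresponds, under $\tau$, to an $e$-period in the $\ell$-abacus $\cA(\bla,\bs)$ (this correspondence is exactly the content of the proof of Proposition \ref{propheis1}); shifting that period one step to the right is the abacus avatar of $\tc_{(1)}$. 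On the other hand the $\Ul$-crystal operators are, after passing to the $e$-abacus of the level-rank dual, single-bead moves that do not interact with the $e$-periodicity structure that records the period partition $\ka$. Concretely, one shows that the $\Ul$-signature computation (the $i$-signature used to locate good boxes on the $e$-side) is unaffected by a rightward shift of a whole $e$-period: the contents involved in an $e$-period are a full set of residues modulo $e$, so reduction modulo $\ell$ on the dual side sees such a period as a ``balanced'' block contributing a cancelling $+{-}$ pair pattern to every signature. Hence the position of the good box for $\tilde e_i^{(\ell)}$ or $\tilde f_i^{(\ell)}$ is the same before and after applying $\tc_{(1)}$, and the two operations can be performed in either order; iterating gives commutation with $\tc_\si$ for all $\si$, and dually with $\tc_{-\si}$.

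Granting this commutation, I would finish as follows. Let $|\bla,\bs\rangle$ be a $\Ue$-highest weight vertex. Inside its $\Ul$-crystal component there is a unique source $|\bla_0,\bs\rangle$, which is highest weight for both $\Ue$ and $\Ul$ — because applying $\tilde e_i^{(\ell)}$'s keeps us $\Ue$-highest weight (the two sets of Kashiwara operators commute, as in Appendix), so the $\Ul$-source is still $\Ue$-highest weight, hence doubly highest weight. Write $|\bla,\bs\rangle = F\,|\bla_0,\bs\rangle$ for some composite $F$ of operators $\tilde f_i^{(\ell)}$. Then
$$
\tc_\si|\bla,\bs\rangle = \tc_\si F\,|\bla_0,\bs\rangle = F\,\tc_\si|\bla_0,\bs\rangle = F\,\tb_\si|\bla_0,\bs\rangle = \tb_\si F\,|\bla_0,\bs\rangle = \tb_\si|\bla,\bs\rangle,
$$
using the $\Ul$-commutation of $\tc_\si$ in the second step, Proposition \ref{propheis1} in the third, and Definition \ref{defbsigma}(2) in the fourth. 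One subtlety to address: the equality $\tc_\si F = F\tc_\si$ only makes literal sense if none of the intermediate operators outputs $0$; since we are applying $\tilde f$-type operators along a path that reaches $|\bla,\bs\rangle$, and $\tc_\si$ never outputs $0$, this is fine, but I would phrase the argument so that it passes through well-defined vertices at every stage (reading the path from $|\bla,\bs\rangle$ down to its source and back up).

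The main obstacle I anticipate is the combinatorial commutation claim of the second paragraph — verifying that adding a good vertical $e$-strip does not disturb the $\Ul$-crystal structure. This requires a careful bookkeeping of how an $e$-period in the $\ell$-abacus looks after applying $\tau^{-1}$ and then re-reading residues modulo $\ell$ (equivalently, understanding the shape of a good vertical $e$-strip on the level-rank dual side), together with the fact that the $i$-signature for $\Ul$ is insensitive to such a balanced block. This is where the bulk of the work lies; once it is in place, the reduction to Proposition \ref{propheis1} is formal. The analogous statement for $\tc_{-\si}$ (including the vanishing convention when the relevant period cannot be shifted left) follows by the same argument, or by conjugating via the order-reversing involution $|\bla,\bs\rangle\mapsto|\bla,\bs\rangle'$ which intertwines $\Ue$- and $\Ul$-structures.
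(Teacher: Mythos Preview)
Your overall strategy matches the paper's exactly: reduce to the doubly highest weight case (Proposition~\ref{propheis1}) by induction along a path of $\Ul$-crystal operators $\tdf_{j_r}\cdots\tdf_{j_1}$, using that $\tb_\si$ commutes with these by Definition~\ref{defbsigma}(2) and that the $\Ul$-source of a $\Ue$-highest weight vertex is doubly highest weight (Theorem~\ref{thmlevelrank}). The formal chain of equalities you write at the end is precisely the paper's.

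Where your sketch diverges is in the argument for the key commutation $\tc_\si\,\tdf_{j_r}=\tdf_{j_r}\,\tc_\si$. Your heuristic (``an $e$-period is a full set of residues mod $e$, hence contributes a cancelling $+/-$ pattern to every $\Ul$-signature'') is not quite right: the $\Ul$-signature is computed with residues mod $\ell$, not mod $e$, so having all residues mod $e$ does not by itself give balance on the $\Ul$ side. The paper's argument is more direct and exploits the hypothesis more sharply. Since every intermediate vertex in the induction is $\Ue$-highest weight, its $\ell$-abacus is \emph{totally $e$-periodic}. The paper then translates the single $\Ul$-operator $\tdf_{j_r}$ back through level-rank duality to the $\ell$-abacus: for $j_r\neq 0$ it moves one black bead down a row, and for $j_r=0$ it moves a bead from $(\ell,d)$ to $(1,d-e)$; in either case the moved bead stays inside its own $e$-period and the period structure is preserved. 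Because $\tc_\si$ is realised on these vertices as shifting whole $e$-periods rigidly, while $\tdf_{j_r}$ rearranges a bead \emph{within} one period, the two operations visibly commute, and the reduced $j_r$-word on the $e$-abacus side is unchanged. This is the missing ingredient you flagged as ``the bulk of the work'': rather than a signature-cancellation argument, the clean statement is that $\tdf_{j_r}$ acts internally to one $e$-period.

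One minor notational point: the $\Ul$-operators can change the $\ell$-charge (the paper writes $|\bnu,\bt\rangle$ and $|\bbla,\bbs\rangle$), so your $|\bla_0,\bs\rangle$ should allow a different charge.
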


\newcommand{\bbla}{\overline{\bla}}
\newcommand{\bbs}{\overline{\bs}}

\proof
Write $|\bla,\bs\rangle = \dot{F_{\bj}} |\bbla,\bbs\rangle$
where $|\bbla,\bbs\rangle$ is the highest weight vertex for $\Ul$ associated to $|\bla,\bs\rangle$,
and where $\dot{F_{\bj}} = \tdf_{j_r}\dots\tdf_{j_1}$ is a sequence of Kashiwara crystal operators of $\Ul$.
Because of Theorem \ref{thmlevelrank}, the two Kashiwara crystals commute, and
thus $|\bbla,\bbs\rangle$ is a doubly highest weight vertex.
We prove the result by induction on $r\in\N$.
If $r=0$, then $|\bla,\bs\rangle$ is a doubly highest weight vertex and this is Proposition \ref{propheis1}.
Suppose that the result holds for a fixed $r-1\geq0$.
Write $|\bnu,\bt\rangle=\tdf_{j_{r-1}}\dots\tdf_{j_1}|\bbla,\bbs\rangle$, so that
$|\bla,\bs\rangle = \tdf_{j_r} |\bnu,\bt\rangle$.
Because the crystal level-rank duality is realised in terms of abaci, 
we need to investigate one last time the action of $\tdf_{j_r}$ in the abacus.
We know that $\tdf_{j_r}$ acts on an $e$-partition by adding its good addable $j_r$-box 
(i.e. of content $j_r$ modulo $\ell$).
This corresponds to shifting a particular (white) bead one step up in the $e$-abacus of $|\bnu,\bs\rangle$,
see Example \ref{exalevelrank} for an illustration.
Now, if $j_r\neq 0$, 
then this corresponds to moving down a (black) bead in the $\ell$-abacus.
Since the resulting abacus $\cA(\bla,\bs)$ is again totally $e$-periodic
(the two Kashiwara crystals commute), this preserves the $e$-period containing this bead.
If $j_r=0$, then moving this white bead up corresponds to moving a black bead in position $(\ell,d)$ in the $\ell$-abacus
(which is the first element of its $e$-period) down to position $(1,d-e)$. Again, this preserves the $e$-period.
In both cases, the reduced $j_r$-word in the $e$-abacus (see \cite[Section 4.2]{Gerber2016} for details)
is unchanged and 
\begin{equation}\tag{$\ast$}\label{eq1}
 \tdf_{j_r} \tc_{\si}  |\bnu,\bt\rangle =   \tc_{\si} \tdf_{j_r}  |\bnu,\bt\rangle.
\end{equation}
Therefore, we have 
$$\arraycolsep=1.4pt\def\arraystretch{1.5}
\begin{array}{rcll}
\tb_{\si} |\bla,\bs\rangle & = &  \tb_{\si} \tdf_{j_r}|\bnu,\bt\rangle & \\
& = &  \tdf_{j_r} \tb_{\si}  |\bnu,\bt\rangle & \text{by Definition \ref{defbsigma}} \\
& = &  \tdf_{j_r} \tc_{\si}  |\bnu,\bt\rangle & \text{by induction hypothesis}\\
& = &   \tc_{\si} \tdf_{j_r}  |\bnu,\bt\rangle & \text{by (\ref{eq1})}\\
& = &   \tc_{\si} |\bla,\bs\rangle. & \\
\end{array}
$$
\endproof

We are now ready to prove Theorem \ref{thmheis}.
It remains to investigate the action of the Kashiwara crystal operators of $\Ue$.

\medskip

\textit{Proof of Theorem \ref{thmheis}.}
Write $|\bla,\bs\rangle = F_\bi|\bbla,\bs\rangle$
where $|\bbla,\bs\rangle$ is the highest weight vertex for $\Ue$ associated to $|\bla,\bs\rangle$,
and where ${F_{\bi}} = \tf_{i_r}\dots\tf_{i_1}$ is a sequence of Kashiwara crystal operators of $\Ue$.
We prove the result by induction on $r\in\N$.
If $r=0$, then $|\bla,\bs\rangle$ is a highest weight vertex for $\Ue$ and this is Proposition \ref{propheis2}.
Suppose that the result holds for a fixed $r-1\geq0$.
Write $|\bnu,\bs\rangle=\tf_{i_{r-1}}\dots\tf_{i_1}|\bbla,\bs\rangle$, so that
$|\bla,\bs\rangle = \tf_{i_r} |\bnu,\bs\rangle$.
Consider the reduced $i_r$-word for $|\bnu,\bs\rangle$.
Again, it is preserved by the action of $\tc_\si$
by Property (1) of Definition \ref{defvertstrip},
and we have
$$
\tdf_{j_r} \tc_{\si}  |\bnu,\bt\rangle =   \tc_{\si} \tdf_{j_r}  |\bnu,\bt\rangle.
$$
The commutation of $\tb_\si$ with $\tf_{i_r}$ together with the induction hypothesis completes the proof.
\begin{flushright}
$\square$
\end{flushright}

\begin{rem}\label{remheis}
Theorem \ref{thmheis} also yields an explicit description of the operators $\tb_{1,c}$.
It acts on any charged $\ell$-partition by adding its $k$-th good vertical $e$-strip,
where $c$ is the content of the $k$-th addable box of $\ka$ (with respect to the order $>$ on boxes).
\end{rem}

\subsection{Impact of conventions and relations with other results}

We end this section by mentioning an alternative realisation of the $\cH$-crystal.
In fact, some of the combinatorial procedures require some conventional choices, such as 
the maps $\tau$ and $\dtau$ yielding the level-rank duality, or
the order on the boxes or on the
vertical strips of multipartition.
Like in the case of Kashiwara crystals, see e.g. \cite[Remark 3.17]{BrundanKleshchev2009} or \cite[Remark 4.9]{Gerber2016}
Choosing a different convention yields to a slightly different version of the Heisenberg crystal.

We have already seen in the proof of Theorem \ref{thmschur} that conventions can be adjusted to 
fit Losev's \cite{Losev2015} or Iijima's \cite{Iijima2012} results about the action of a Heisenberg crystal operator
or a Schur function respectively. This is done by using the conjugation of multipartitions.
More precisely, one can decide to identify a charged $\ell$-partition $|\bla,\bs\rangle$ with $|\la',s\rangle$ instead of $|\la,s\rangle$
with the notations of Section \ref{abaci}.
Then, the action of the Heisenberg crystal operators is expressed in terms of addable
horizontal $e$-strips in the $\ell$-partition.
This is equivalent to changing the order on the vertical strips, applying a Heisenberg crystal operator, and then conjugate.

\medskip

One could also decide to exchange the role of $\tau$ and $\dtau$ in the level-rank duality, see Appendix.
Then, applying a Heisenberg crystal operator on an $\ell$-abacus would de decribed in terms of the corresponding $e$-abacus.
More precisely, $\tb_{1,c}$ would then consists in shifting a $\ell$-period in the $e$-abacus in the particular case
where it is totally $\ell$-periodic (see also the proof of Proposition \ref{propheis1}).
This permits to give an interpretation of Tingley's tightening procedure on descending $\ell$-abaci
\cite[Definition 3.8]{Tingley2008}, thereby answering Question 1 of \cite[Section 6]{Tingley2008}.

\begin{prop}\label{proptighten}
Let $\cA$ be a descending $\ell$-abacus. For all $k\geq1$, denote $T_k$ the $k$-th tightening operator associated to $\cA$.
We have $$T_k (\cA) = \tb_{-1,c}(\cA)$$ 
where $c$ is determined by $k$.
\end{prop}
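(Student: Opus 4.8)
The plan is to translate Tingley's tightening procedure into the abacus language already developed in the paper and then match it termwise with the operator $\tb_{-1,c}$. First I would recall that a descending $\ell$-abacus is, after applying the bijection $\tau$ (or rather its inverse composed with the level-rank setup), the same data as a charged $\ell$-partition whose $\ell$-abacus is read off diagonally; the key point is that Tingley's ``descending'' condition is precisely the condition that makes the abacus amenable to being cut into $e$-periods from the bottom up, so the $k$-th period of the $\ell$-abacus is well defined exactly as in \cite{JaconLecouvey2012}. I would carefully recall Tingley's Definition 3.8: the $k$-th tightening operator $T_k$ slides a specified bead (the one bounding the $k$-th ``gap'' from the appropriate end) as far as it will go, which on the $\ell$-abacus is the same as sliding the first bead of the $k$-th $e$-period one step to the left — i.e. decreasing the corresponding part of $\ka$ by one in the $k$-th slot.

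The key steps, in order, would be: (1) set up the dictionary between descending $\ell$-abaci and the abaci used here, noting which of $\tau,\dtau$ is being used (as flagged in the paragraph preceding the proposition, one exchanges their roles so that the relevant periodicity is the $\ell$-periodicity of the $e$-abacus, or dually the $e$-periodicity of the $\ell$-abacus); (2) unwind Tingley's recursive definition of $T_k$ and show that, on this abacus, $T_k$ shifts exactly the first bead of the $k$-th $e$-period one unit toward the core, and leaves all other beads fixed — this is the combinatorial heart; (3) invoke Definition \ref{defbsigma} together with Remark \ref{remheis} (and Theorem \ref{thmheis}): $\tb_{-1,d}=\tb_\theta\tb_{-\ka}$ with $\theta=\ka\setminus\{\ga\}$, $\ga$ the removable box of $\ka$ of content $d$, so $\tb_{-1,c}$ removes the $k$-th good vertical $e$-strip precisely when $c$ is the content of the $k$-th removable box of $\ka$; (4) match the two: removing the $k$-th good vertical $e$-strip is, by the proof of Proposition \ref{propheis1}, the same as shifting the $k$-th $e$-period one step back in the abacus, which is step (2). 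Finally I would record the correspondence $k\leftrightarrow c$: $c$ is the content of the removable box of $\ka|\bla,\bs\rangle$ sitting in the row/slot that $T_k$ acts on, i.e. $c=\ka_k-k+s_{\bullet}$ with the appropriate charge shift dictated by the level-rank conventions.

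I expect the main obstacle to be step (2): reconciling Tingley's bookkeeping — his abaci are indexed and his ``tightening'' is phrased in terms of moving beads past gaps in a particular descending order — with the period-shifting description used here. In particular one must check that Tingley's choice of \emph{which} bead to slide at stage $k$ (after stages $1,\dots,k-1$ have been performed) coincides with ``the first bead of the $k$-th $e$-period of the already-tightened abacus,'' and that the two recursions (his, and the recursive definition of good vertical $e$-strips via $>$ in Definition \ref{defgoodverticalstrip}) produce the same sequence of moves. This is essentially a matter of checking that both procedures are greedy with respect to the same order, but getting the indexing conventions and the direction (left vs.\ right, ascending vs.\ descending) to line up is where the care is needed; once that is done, the remaining steps are immediate from the results already established, especially Theorem \ref{thmheis} and Remark \ref{remheis}. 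I would close by remarking that this identification is the precise sense in which Tingley's Question 1 of \cite[Section 6]{Tingley2008} is answered: his tightening operators are the negative Heisenberg crystal operators $\tb_{-1,c}$, hence assemble into the $\cH$-crystal structure $\sC_s$.
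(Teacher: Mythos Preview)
Your overall strategy is close to the paper's, but there is a genuine gap in step (4) and some related confusion running through (2)--(4). The crux is that you invoke Proposition \ref{propheis1}, which applies only to \emph{doubly} highest weight vertices (equivalently, to charged multipartitions whose $\ell$-abacus is totally $e$-periodic \emph{and} whose $e$-abacus is totally $\ell$-periodic). A descending $\ell$-abacus in Tingley's sense does not satisfy this in general: what the paper observes is that, under the swapped level-rank duality you correctly flag in step (1), the descending condition is equivalent to the $e$-abacus being totally $\ell$-periodic, i.e.\ to the vertex being highest weight for $\Ul$ only. The relevant input is therefore Proposition \ref{propheis2} (with the roles of $e$ and $\ell$ exchanged by the swap), not Proposition \ref{propheis1}. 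This is exactly what bridges the period-shift description of $\tb_{-1,c}$ to the general Heisenberg crystal operators here.

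Relatedly, once you have made the swap you must carry it through consistently: the Heisenberg crystal operators now act on the $e$-abacus by removing good vertical $\ell$-strips, and the periods in play are $\ell$-periods of the $e$-abacus, not $e$-periods of the $\ell$-abacus as you write in (2)--(4). In that picture, Tingley's $T_k$ shifts the entire $k$-th $\ell$-period of the $e$-abacus one step to the left --- not ``the first bead of the $k$-th $e$-period'' --- and this is precisely the period-shift description of $\tb_{-1,c}$ furnished by (the swapped) Proposition \ref{propheis2}. With these two corrections the argument goes through immediately, and in particular the recursive greedy matching you anticipate as the main obstacle is not needed at all: once the right highest-weight condition is identified, $T_k$ and $\tb_{-1,c}$ are literally the same move on the $e$-abacus.
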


\proof Recall that for this statement, we have considered the realisation of the Heisenberg crystal with
the alternative version of level-rank-duality, swapping the roles of $\tau$ and $\dtau$,
so that the operators $\tb_{-1,c}$ act on the $e$-abacus (rather than the $\ell$-abacus) by removing
a vertical $\ell$-strip (Theorem \ref{thmheis}).
Remember that $\tb_{-1,c} = \tb_{\theta} \tb_{-\ka}$ where $\theta$ depends on $c$.
Now, if an $\ell$-abacus is descending \cite[Definition 3.6]{Tingley2008}, its corresponding $e$-abacus is
totally $\ell$-periodic.
In particular, the corresponding $e$-partition is a highest weight vertex in the $\Ul$-crystal, and we can use 
Proposition \ref{propheis2}. 
In particular, $\tb_{-\ka}$ and $\tb_{\theta}$ act by shifting $\ell$-periods in the $e$-abacus, 
and $\tb_{-1,c}$ shifts one $\ell$-period, say the $k$-th one, one step to the left in the $e$-abacus.
This precisely what $T_k$ does.
\endproof

Finally, we mention that in another particular case, the Heisenberg crystal operator $\tb_{-\ka}$ coincide with 
the canonical $\Ue$-isomorphism $\varphi$ (up to cyclage) of \cite[Theorem 5.26]{Gerber2015} 
used to construct an affine Robinson-Schensted correspondence.

\begin{prop}\label{propreduc}
Let $|\bla,\bs\rangle\in B_s$ be a doubly highest weight vertex. Write $\ka=\ka|\bla,\bs\rangle$. We have
$$\tb_{-\ka}\xi^{h} |\bla,\bs\rangle = \varphi|\bla,\bs\rangle,$$
where $h$ is the number of parts of $\ka$ and 
$\xi$ is the cyclage isomorphism, see \cite[Proposition 4.4]{Gerber2015}.
\end{prop}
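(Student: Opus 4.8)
The plan is to reduce the statement to the already-established identification between Heisenberg crystal operators on doubly highest weight vertices and shifts of $e$-periods in the $\ell$-abacus (Proposition \ref{propheis1}), and then match this with the explicit description of the cyclage isomorphism $\varphi$ of \cite[Theorem 5.26]{Gerber2015}. So first I would unwind both sides in terms of abaci: since $|\bla,\bs\rangle$ is a doubly highest weight vertex, its $\ell$-abacus $\cA(\bla,\bs)$ is totally $e$-periodic, and by definition $\ka=\ka|\bla,\bs\rangle=(\ka_1,\dots,\ka_h)$ records the part encoded by each $e$-period. The operator $\tb_{-\ka}$ acts by shifting the $k$-th $e$-period $\ka_k$ steps to the left (which is possible here precisely because the vertex is doubly highest weight, so all periods can be "emptied"), producing the $\cH$-highest weight vertex $\tb_{-\ka}|\bla,\bs\rangle$ whose associated $\ka$ is empty.

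Next I would bring in the cyclage isomorphism $\xi$ of \cite[Proposition 4.4]{Gerber2015}: applying $\xi$ once corresponds to a prescribed elementary move on the $\ell$-abacus (a single cyclage step), and $h$ iterations $\xi^h$ shift things so as to make the combinatorics of $\varphi$ line up. The key point is that $\varphi|\bla,\bs\rangle$, by its construction in \cite[Theorem 5.26]{Gerber2015} as the canonical $\Ue$-isomorphism used for the affine Robinson--Schensted correspondence, also amounts to simultaneously pushing each of the $h$ periods all the way to the left, \emph{up to} the global cyclage correction that $\xi^h$ supplies. Thus I would verify, period by period, that the net displacement of the $k$-th $e$-period under $\tb_{-\ka}\circ\xi^h$ equals its displacement under $\varphi$. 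Since the vertex is doubly highest weight, the periods do not interfere with one another (each period is a genuine $e$-period in the sense of \cite{JaconLecouvey2012} and stays one after each elementary shift), so this reduces to a one-period computation repeated $h$ times.

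Concretely the key steps, in order, would be: (1) translate $\tb_{-\ka}|\bla,\bs\rangle$ into the abacus picture using Proposition \ref{propheis1} and Definition \ref{defbsigma}(1), noting that $\tb_{-\ka}$ is defined here because all shifts to the left are possible; (2) recall from \cite[Proposition 4.4]{Gerber2015} the description of $\xi$ on abaci and compute the effect of $\xi^h$; (3) recall from \cite[Theorem 5.26]{Gerber2015} the description of $\varphi$ on doubly highest weight vertices in terms of abaci; (4) check that the two resulting abaci coincide by comparing the position of each period, using that distinct periods act independently for a doubly highest weight vertex; (5) conclude by reinterpreting the common abacus as the charged $\ell$-partition $\varphi|\bla,\bs\rangle$.

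The main obstacle I anticipate is step (2)--(3): making precise how the $h$ applications of the cyclage $\xi$ interact with the leftward period shifts, i.e. checking that $\xi^h$ exactly absorbs the discrepancy between "shift each period to the left by its recorded part" and the normalization built into $\varphi$ (which lives in a fixed fundamental domain for the cyclage action). This is essentially a bookkeeping argument about how the cyclage moves the "origin" of the $1$-abacus relative to the $e\ell$-rectangles, but getting the offsets exactly right — including verifying that $h$, the number of parts of $\ka$, is precisely the number of cyclage steps needed — is the delicate part. Everything else is a direct translation through the dictionary of Section \ref{mpwedge} together with results already quoted.
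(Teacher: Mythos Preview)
Your outline is correct and, at the level of ideas, matches the paper's proof: both sides are compared through their abacus descriptions, using that a doubly highest weight vertex has a totally $e$-periodic $\ell$-abacus. However, the paper short-circuits your steps (2)--(4) with two structural observations you do not invoke. First, by \cite[Proposition 5.7]{Gerber2016}, doubly highest weight vertices are cylindric in the sense of \cite{Gerber2015}, so $\varphi$ is defined and equals $\psi^t$, where $\psi$ is the reduction isomorphism and $t$ is the number of pseudoperiods of $|\bla,\bs\rangle$. Second, directly from the definition of $\ka$, one has $t=h$. Once $\varphi=\psi^h$ is known, there is no period-by-period bookkeeping left: applying the cyclage $h$ times is exactly the normalisation needed to make the explicit formula for $\tb_{-\ka}$ (shift the $k$-th period $\ka_k$ steps to the left) coincide with that for $\psi^h$. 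So the ``delicate'' offset computation you anticipate disappears once you use the decomposition $\varphi=\psi^h$ rather than working with $\varphi$ directly.
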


\proof
We use the notations of \cite{Gerber2015}.
First of all, because of \cite[Proposition 5.7]{Gerber2016}, 
doubly highest weight vertices are cylindric in the sense of \cite[Definition 2.3]{Gerber2015},
and $\varphi|\bla,\bs\rangle$ is therefore well defined, and simply verifies $\varphi=\psi^t$ where $t$
is the number of pseudoperiods in $|\bla,\bs\rangle$
and $\psi$ is the reduction isomorphism.
In fact, by definition of $\ka$, we have $t=h$, the number of (non-zero) parts of $\ka$,
and it suffices to apply the cyclage $h$ times to $|\bla,\bs\rangle$ to
match the formulas for $\tb_{-\ka}$ and $\psi^t$.
\endproof


\subsection{Examples of computations}

By Remark \ref{remheis}, the Heisenberg crystal can be computed recursively from
its highest weight vertices, each of which yields a unique connected component,
isomorphic to the Young graph by Proposition \ref{propHcrystal}.

\medskip

The empty multipartition is obviously a highest weight vertex for $\cH$,
and so is every multipartition with less than $e$ boxes.
For instance, if $\ell=2$, $\bs=(0,1)$ and $e=3$, we can compute the connected
components of the Heisenberg crystal of $\cF_\bs$ with highest weight vertex $(\m,\m)$
and $\left(  {\tiny \young(0)} \ , \m \right)$. Up to rank $13$, we get the following subgraph
of the $\cH$-crystal.

\begin{figure}[H] 

\makebox[\textwidth][c]{

\scalebox{.65}{

\begin{tabular}{cc}

\begin{tikzpicture}[scale=0.6, every node/.style={scale=1}]

\node (a) at (8,12) {%
${ \tiny  \m \, \m }$
};

\node (b) at (8,8) {%
{${ \tiny \young(0,\moinsun) \, \young(1)   }$}
};

\node (c1) at (5,4) {%
{${ \tiny \young(01,\moinsun0) \, \young(12) }$}
}; 
\node (c2) at (11,4) {%
{${ \tiny \young(0,\moinsun,\moinsdeux) \, \young(1,0,\moinsun) }$}
};

\node(d1) at (2,0) {%
{${ \tiny \young(012,\moinsun01) \, \young(123)  }$}
};
\node(d2) at (8,0) {%
{${ \tiny \young(01,\moinsun0,\moinsdeux) \, \young(12,0,\moinsun) }$}
};
\node(d3) at (14,0) {%
{${ \tiny \young(0,\moinsun,\moinsdeux,\moinstrois,\moinsquatre) \, \young(1,0,\moinsun,\moinsdeux) }$}
};

\node(e1) at (0,-4) {%
{${ \tiny \young(0123,\moinsun012) \, \young(1234)   }$}
};
\node(e2) at (4,-4) {%
{${ \tiny \young(012,\moinsun01,\moinsdeux) \, \young(123,0,\moinsun)  }$}
};
\node(e3) at (8,-4) {%
{${ \tiny \young(01,\moinsun0,\moinsdeux\moinsun) \, \young(12,01,\moinsun0) }$}
};
\node(e4) at (12,-4) {%
{${ \tiny \young(01,\moinsun0,\moinsdeux,\moinstrois,\moinsquatre) \, \young(12,0,\moinsun,\moinsdeux)  }$}
};
\node(e5) at (16,-4) {%
{${ \tiny \young(0,\moinsun,\moinsdeux,\moinstrois,\moinsquatre,\moinscinq) \, \young(1,0,\moinsun,\moinsdeux,\moinstrois,\moinsquatre) }$}
};

\draw[->] (a) -- node [font=\tiny, left] {} (b) ;
\draw[->] (b) -- node [font=\tiny,left] {} (c1) ;
\draw[->] (b) -- node [font=\tiny, above] {} (c2) ;
\draw[->] (c1) -- node [font=\tiny, above] {} (d1) ;
\draw[->] (c1) -- node [font=\tiny, above] {} (d2) ;
\draw[->] (c2) -- node [font=\tiny] {} (d3) ;
\draw[->] (c2) -- node [font=\tiny, above] {} (d2) ;

\draw[->] (d1) -- node [font=\tiny, above] {} (e1) ;
\draw[->] (d1) -- node [font=\tiny, above] {} (e2) ;
\draw[->] (d2) -- node [font=\tiny, above] {} (e2) ;
\draw[->] (d2) -- node [font=\tiny, above] {} (e3) ;
\draw[->] (d2) -- node [font=\tiny, above] {} (e4) ;
\draw[->] (d3) -- node [font=\tiny, above] {} (e4) ;
\draw[->] (d3) -- node [font=\tiny, above] {} (e5) ;

\node (f) at (0,-7) { \ };

\end{tikzpicture}

&

\begin{tikzpicture}[scale=0.6, every node/.style={scale=1}]

\node (a) at (8,12) {%
${ \tiny  \young(0) \, \m }$
};

\node (b) at (8,8) {%
{${ \tiny \young(0,\moinsun) \, \young(10)   }$}
};

\node (c1) at (5,4) {%
{${ \tiny \young(01,\moinsun0) \, \young(12,0) }$}
}; 
\node (c2) at (11,4) {%
{${ \tiny \young(0,\moinsun,\moinsdeux,\moinstrois) \, \young(1,0,\moinsun) }$}
};

\node(d1) at (2,0) {%
{${ \tiny \young(012,\moinsun01) \, \young(123,0)  }$}
};
\node(d2) at (8,0) {%
{${ \tiny \young(01,\moinsun0,\moinsdeux,\moinstrois) \, \young(12,0,\moinsun) }$}
};
\node(d3) at (14,0) {%
{${ \tiny \young(0,\moinsun,\moinsdeux,\moinstrois,\moinsquatre) \, \young(1,0,\moinsun,\moinsdeux,\moinstrois) }$}
};

\node(e1) at (0,-4) {%
{${ \tiny \young(0123,\moinsun012) \, \young(1234,0)   }$}
};
\node(e2) at (4,-4) {%
{${ \tiny \young(012,\moinsun01,\moinsdeux,\moinstrois) \, \young(123,0,\moinsun)  }$}
};
\node(e3) at (8,-4) {%
{${ \tiny \young(01,\moinsun0,\moinsdeux\moinsun,\moinstrois) \, \young(12,01,\moinsun0) }$}
};
\node(e4) at (12,-4) {%
{${ \tiny \young(01,\moinsun0,\moinsdeux,\moinstrois,\moinsquatre) \, \young(12,0,\moinsun,\moinsdeux,\moinstrois)  }$}
};
\node(e5) at (16,-4) {%
{${ \tiny \young(0,\moinsun,\moinsdeux,\moinstrois,\moinsquatre,\moinscinq,\moinssix) \, \young(1,0,\moinsun,\moinsdeux,\moinstrois,\moinsquatre) }$}
};

\draw[->] (a) -- node [font=\tiny, left] {} (b) ;
\draw[->] (b) -- node [font=\tiny,left] {} (c1) ;
\draw[->] (b) -- node [font=\tiny, above] {} (c2) ;
\draw[->] (c1) -- node [font=\tiny, above] {} (d1) ;
\draw[->] (c1) -- node [font=\tiny, above] {} (d2) ;
\draw[->] (c2) -- node [font=\tiny] {} (d3) ;
\draw[->] (c2) -- node [font=\tiny, above] {} (d2) ;

\draw[->] (d1) -- node [font=\tiny, above] {} (e1) ;
\draw[->] (d1) -- node [font=\tiny, above] {} (e2) ;
\draw[->] (d2) -- node [font=\tiny, above] {} (e2) ;
\draw[->] (d2) -- node [font=\tiny, above] {} (e3) ;
\draw[->] (d2) -- node [font=\tiny, above] {} (e4) ;
\draw[->] (d3) -- node [font=\tiny, above] {} (e4) ;
\draw[->] (d3) -- node [font=\tiny, above] {} (e5) ;

\end{tikzpicture}
\end{tabular}
}
}
\label{ex1}
\end{figure}

For $e=2$ and $\bs=(3,2,5)$, the $3$-partition $(2,1,4)$ is clearly
a highest weight vertex for $\cH$ by Theorem \ref{thmheis}.
The corresponding connected component, up to rank $17$ is the following graph.

\begin{figure}[H] 
\makebox[\textwidth][c]{
\scalebox{.65}{

\begin{tikzpicture}[scale=0.6, every node/.style={scale=1}]

\node (a) at (8,12) {%
${ \tiny  \young(34) \, \young(2) \, \young(5678)   }$
};

\node (b) at (8,8) {%
{${ \tiny \young(34) \, \young(23) \, \young(5678,4)  }$}
};

\node (c1) at (4,4) {%
{${ \tiny \young(345) \, \young(234) \, \young(5678,45)}$}
}; 
\node (c2) at (12,4) {%
{${ \tiny\young(34,2) \, \young(23) \, \young(5678,4,3) }$}
};

\node(d1) at (0,0) {%
{${ \tiny \young(345) \, \young(234) \, \young(5678,456)}$}
};
\node(d2) at (8,0) {%
{${ \tiny \young(34,2) \, \young(234) \, \young(5678,45,3)}$}
}; 
\node(d3) at (16,0) {%
{${ \tiny \young(34,2,1) \, \young(23) \, \young(5678,4,3,2)}$}
};

\node(e1) at (-6,-4) {%
{${ \tiny \young(3456) \, \young(234) \, \young(5678,4567)}$}
}; 
\node(e2) at (1,-4) {%
{${ \tiny \young(345,2) \, \young(234) \, \young(5678,456,3)}$}
}; 
\node(e3) at (8,-4) {%
{${ \tiny \young(34,23) \, \young(234) \, \young(5678,45,34)}$}
}; 
\node(e4) at (15,-4) {%
{${ \tiny \young(34,2,1,0) \, \young(234) \, \young(5678,45,3,2)}$}
}; 
\node(e5) at (22,-4) {%
{${ \tiny \young(34,2,1,0) \, \young(23,1) \, \young(5678,4,3,2)}$}
};

\draw[->] (a) -- node [font=\tiny, left] {} (b) ;
\draw[->] (b) -- node [font=\tiny,left] {} (c1) ;
\draw[->] (b) -- node [font=\tiny, above] {} (c2) ;
\draw[->] (c1) -- node [font=\tiny, above] {} (d1) ;
\draw[->] (c1) -- node [font=\tiny, above] {} (d2) ;
\draw[->] (c2) -- node [font=\tiny] {} (d3) ;
\draw[->] (c2) -- node [font=\tiny, above] {} (d2) ;

\draw[->] (d1) -- node [font=\tiny, above] {} (e1) ;
\draw[->] (d1) -- node [font=\tiny, above] {} (e2) ;
\draw[->] (d2) -- node [font=\tiny, above] {} (e2) ;
\draw[->] (d2) -- node [font=\tiny, above] {} (e3) ;
\draw[->] (d2) -- node [font=\tiny, above] {} (e4) ;
\draw[->] (d3) -- node [font=\tiny, above] {} (e4) ;
\draw[->] (d3) -- node [font=\tiny, above] {} (e5) ;

\node (f) at (0,-7) { \ };

\end{tikzpicture}
}
}
\end{figure}

We also wish to give an example of the asymptotic case.
Take $\ell=3$, $\bs=(0,7,19)$, $\bla=(1,3.2.1,3.1)$ and $e=3$,
so that $|\bla,\bs\rangle$ is a highest weight vertex for $\cH$ and is $3$-asymptotic.
We see in the following corresponding $\cH$-crystal that 
the elements $\tb_{\si}$, for $|\si|< 4$, only act on the third component of $|\bla,\bs\rangle$,
but that $\tb_{(1^4)}$ acts already on the second component.
This illustrates Theorem \ref{thmschur} (2).

\begin{figure}[H] 
\makebox[\textwidth][c]{
\scalebox{.65}{

\begin{tikzpicture}[scale=1, every node/.style={scale=1}]

\node (a) at (8,12) {%
${ \tiny  \young(0) \, \young(789,67,5) \, \young(\dixneuf\vingt\vingtun,\dixhuit)   }$
};

\node (b) at (8,8) {%
${ \tiny  \young(0) \, \young(789,67,5) \, \young(\dixneuf\vingt\vingtun,\dixhuit,\dixsept,\seize,\quinze)   }$
};

\node (c1) at (4,4) {%
${ \tiny  \young(0) \, \young(789,67,5) \, \young(\dixneuf\vingt\vingtun,\dixhuit\dixneuf,\dixsept\dixhuit,\seize\dixsept,\quinze)   }$
}; 
\node (c2) at (12,4) {%
${ \tiny  \young(0) \, \young(789,67,5) \, \young(\dixneuf\vingt\vingtun,\dixhuit,\dixsept,\seize,\quinze,\quatorze,\treize,\douze)   }$
};

\node(d1) at (0,0) {%
${ \tiny  \young(0) \, \young(789,67,5) \, \young(\dixneuf\vingt\vingtun,\dixhuit\dixneuf\vingt,\dixsept\dixhuit\dixneuf,\seize\dixsept\dixhuit,\quinze)   }$
}; 
\node(d2) at (8,0) {%
${ \tiny  \young(0) \, \young(789,67,5) \, \young(\dixneuf\vingt\vingtun,\dixhuit\dixneuf,\dixsept\dixhuit,\seize\dixsept,\quinze,\quatorze,\treize,\douze)   }$
}; 
\node(d3) at (16,0) {%
${ \tiny  \young(0) \, \young(789,67,5) \, \young(\dixneuf\vingt\vingtun,\dixhuit,\dixsept,\seize,\quinze,\quatorze,\treize,\douze,\onze,\dix,9)   }$
};

\node(e1) at (-6,-4) {%
${ \tiny  \young(0) \, \young(789,67,5) \, \young(\dixneuf\vingt\vingtun\vingtdeux,\dixhuit\dixneuf\vingt\vingtun,\dixsept\dixhuit\dixneuf\vingt,\seize\dixsept\dixhuit,\quinze)   }$
}; 
\node(e2) at (1,-4) {%
${ \tiny  \young(0) \, \young(789,67,5) \, \young(\dixneuf\vingt\vingtun,\dixhuit\dixneuf\vingt,\dixsept\dixhuit\dixneuf,\seize\dixsept\dixhuit,\quinze,\quatorze,\treize,\douze)   }$
}; 
\node(e3) at (8,-4) {%
${ \tiny  \young(0) \, \young(789,67,5) \, \young(\dixneuf\vingt\vingtun,\dixhuit\dixneuf,\dixsept\dixhuit,\seize\dixsept,\quinze\seize,\quatorze\quinze,\treize\quatorze,\douze)   }$
}; 
\node(e4) at (15,-4) {%
${ \tiny  \young(0) \, \young(789,67,5) \, \young(\dixneuf\vingt\vingtun,\dixhuit\dixneuf,\dixsept\dixhuit,\seize\dixsept,\quinze,\quatorze,\treize,\douze,\onze,\dix,9)   }$
}; 
\node(e5) at (22,-4) {%
${ \tiny  \young(0) \, \young(789,67,56) \, \young(\dixneuf\vingt\vingtun,\dixhuit,\dixsept,\seize,\quinze,\quatorze,\treize,\douze,\onze,\dix,9,8,7)   }$
};

\draw[->] (a) -- node [font=\tiny, left] {} (b) ;
\draw[->] (b) -- node [font=\tiny,left] {} (c1) ;
\draw[->] (b) -- node [font=\tiny, above] {} (c2) ;
\draw[->] (c1) -- node [font=\tiny, above] {} (d1) ;
\draw[->] (c1) -- node [font=\tiny, above] {} (d2) ;
\draw[->] (c2) -- node [font=\tiny] {} (d3) ;
\draw[->] (c2) -- node [font=\tiny, above] {} (d2) ;

\draw[->] (d1) -- node [font=\tiny, above] {} (e1) ;
\draw[->] (d1) -- node [font=\tiny, above] {} (e2) ;
\draw[->] (d2) -- node [font=\tiny, above] {} (e2) ;
\draw[->] (d2) -- node [font=\tiny, above] {} (e3) ;
\draw[->] (d2) -- node [font=\tiny, above] {} (e4) ;
\draw[->] (d3) -- node [font=\tiny, above] {} (e4) ;
\draw[->] (d3) -- node [font=\tiny, above] {} (e5) ;

\node (f) at (0,-7) { \ };

\end{tikzpicture}
}
}
\end{figure}

\appendix

\section{Crystal level-rank duality}\label{lr}

We recall, using a slightly different presentation, the results of \cite[Section 4]{Gerber2016} concerning the crystal version of level-rank duality.

\medskip

There is a double affine quantum group action on the wedge space.
In Section \ref{jmmofockspace}, we have explained how $\Ue$ acts on $\cF_s$.
It turns out that $\Ul$, where $p=-1/q$, acts on $\cF_s$ in a similar way. More precisely we will:
\begin{enumerate}
 \item index ordered wedges by charged $e$-partitions, using an alternative bijection $\dtau$,
 \item make $\Ul$ act on $\cF_s$ via this new indexation by swapping the roles of $e$ and $\ell$ and replacing $q$ by $p$.
\end{enumerate}

To define $\dtau$, recall that we have introduced the quantities $z(n)\in\Z$, $1\leq y(n) \leq \ell$ and $1\leq x(n)\leq e$ for each $n\in\Z$.
To each pair $(1,c)\in\{1\}\times\Z$, we associate the pair $\dtau(1,d)=(j,d)\in\{1,\dots,\ell\}\times\Z$
where
$$j=x(-c)\mand d=-(y(-c)-1)+\ell z(-c).$$

In particular, $\dtau$ sends the bead in position $(1,c)$ into
the rectangle $z(-c)$, on the row $x(-c)$ and column $y(-c)$ (numbered from right to left within each rectangle).

The $\dtau$ is bijective and we can see $\dtau^{-1}$ as the following procedure:
\begin{enumerate}
 \item Divide the $\ell$-abacus into rectangles of size $e\times\ell$, where the $z$-th rectangle ($z\in\Z$) contains the positions
 $(j,d)$ for all $1\leq j\leq \ell$ and $-e+1+ze\leq d \leq ze$.
 \item Relabel each $(j,d)$ by the second coordinate of $\dtau^{-1}(j,d)$, see Figure \ref{ab8} for an example.
 \item Replace the newly indexed beads on a $1$-abacus according to this new labeling.
\end{enumerate}

\begin{figure}[H]  \centering
\includegraphics[scale=1.2]{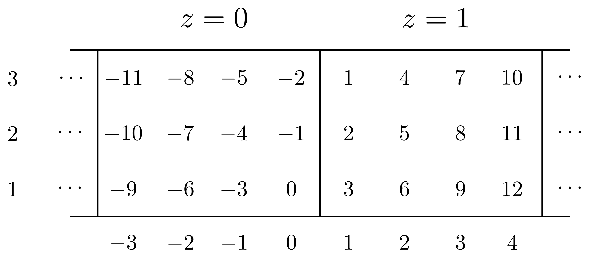}
\caption{Relabelling bead positions in the $e$-abacus according to $\dtau^{-1}$, for $\ell=4$ and $e=3$.}
\label{ab8}
\end{figure}

We see that $\dtau^{-1}$, so also $\dtau$, actually only depends on $e$, not on $\ell$.
In fact, explicit formulas for $\dtau$ and $\dtau^{(1)}$ are given by 
$$\dtau (1,c) = \left( (-c\mod e) + 1 \, , \, -\left\lfloor\frac{-c}{e}\right\rfloor \right) = \left( (-c\mod e) + 1 \, , \,  \frac{-c-(-c\mod e)}{e} \right), $$
and 
$$\dtau^{-1} (j,d) = \left( 1 \, , \, -(j-1)+ed \right),$$
and one notices that $\dtau$ corresponds to taking the usual $e$-quotient and the $e$-core of a partition, where the $e$-core is encoded in the $e$-charge.
More precisely, the renumbering of the beads according to $\dtau$ is the well-know ``folding'' procedure used to compute the $e$-quotient, see \cite{JamesKerber1984}.

\begin{defi}
The (twisted)  level-rank duality is the bijective map
$\dtau\circ(.)'\circ\tau^{-1}$ 
\end{defi}

\begin{rem}
The map $\dtau\circ\tau^{-1}$ already defines a level-rank duality, this was the one studied by Uglov \cite{Uglov1999}.
However, the twisted version defined above (i.e. where the conjugation is added) is the one that is relevant when it comes to crystals, see Theorem \ref{thmlevelrank} below.
\end{rem}

There is a convenient way to picture the crystal level-rank duality as follows.
Starting from an $\ell$-abacus,
stack copies on top of each other by translating by $e$ to the right.
Then, extract a vertical slice of the resulting picture, and read
off the corresponding $e$-partition by looking at the white beads (instead of black beads) 
in each column, starting from the leftmost one.

\begin{exa}\label{exalevelrank}
The abacus $\cA(\bla,\bs)$ with $\ell=4$, $e=3$, $\bla=(1,\emptyset,1^3,5)$ and $\bs=(-1,-1,1,1)$ looks as follows
(the origin is represented with the boldfaced vertical bar).
\begin{figure}[H]  \centering
\includegraphics[scale=0.8]{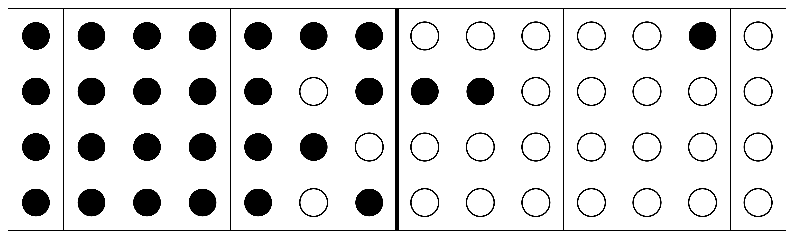}
\label{ab5}
\end{figure}
Stacking copies of $\cA(\bla,\bs)$ gives
\begin{figure}[H]  \centering
\includegraphics[scale=0.8]{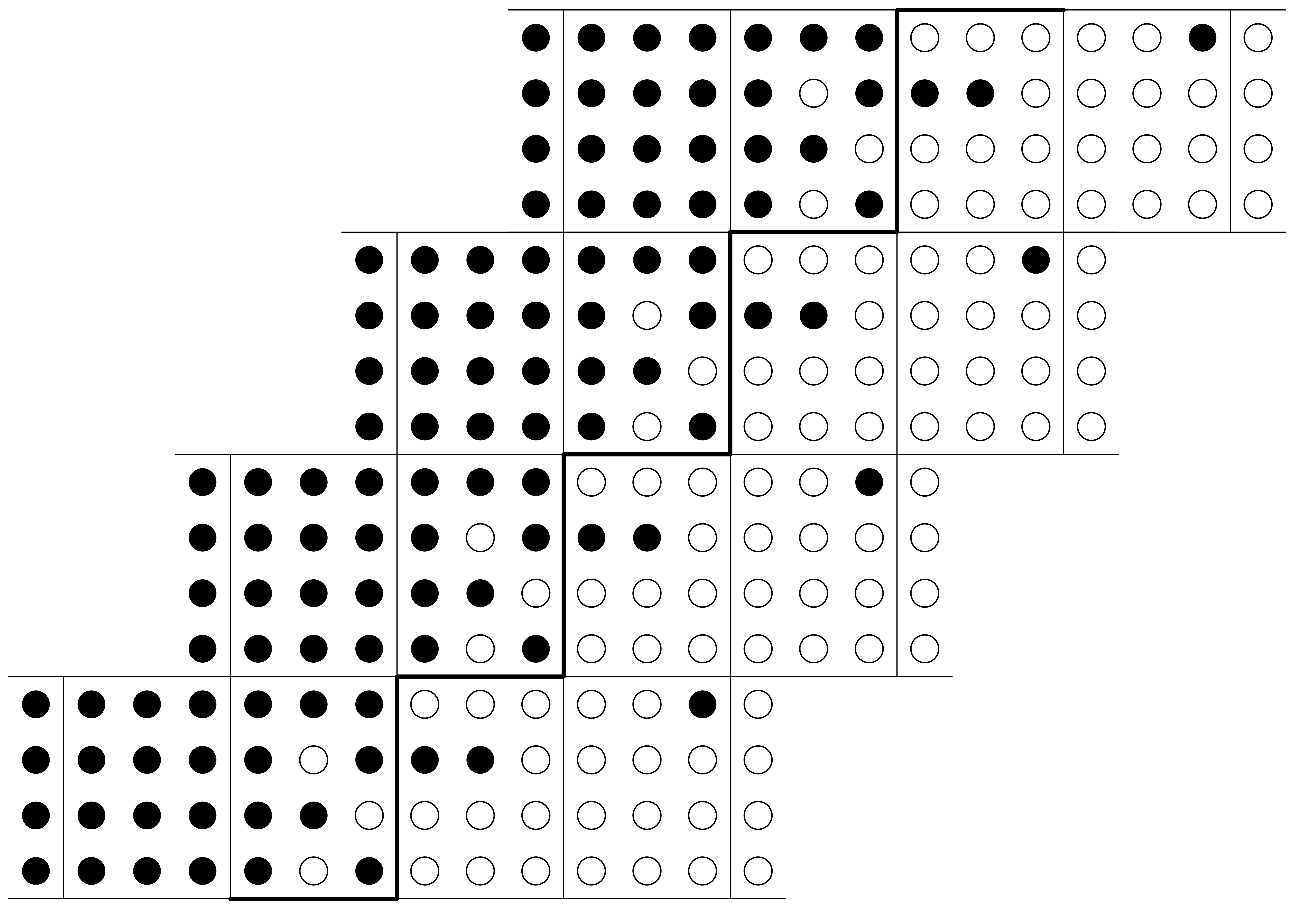}
\label{ab6}
\end{figure}
and extracting one vertical $e$-abacus yields
\begin{figure}[H]  \centering
\includegraphics[scale=0.8]{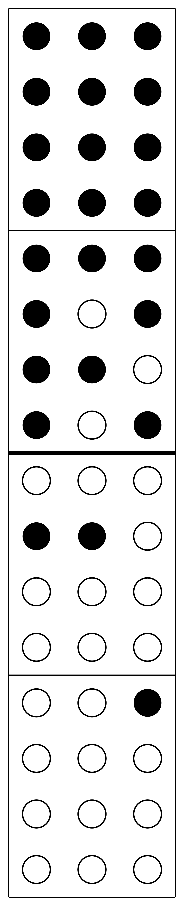}
\label{ab7}
\end{figure}
which corresponds to $|(1,2.1^2,2.1^4),(-1,1,0)\rangle$.
\end{exa}

We have the following induced maps
$$
\begin{array}{ccccccc}
\ds\bigoplus_{|\bs|=s}\cF_{\bs} & 
\xrightleftharpoons[\hspace{4mm}\tau\hspace{4mm}]{\tau^{-1}} & 
\cF_s & 
\xrightleftharpoons[\quad '\quad ]{'} & 
\cF_{-s} & 
\xrightleftharpoons[\dtau^{-1}]{\hspace{4mm}\dtau\hspace{4mm}}  & 
\ds\bigoplus_{|\dbs|=s} \cF_{\dbs}
\\
|\bla,\bs\rangle & \longleftrightarrow & 
|\la,s\rangle &
\longleftrightarrow & 
|\la',-s\rangle &
\longleftrightarrow &
|\dbla,\dbs\rangle
\end{array}
$$

Like $\tau$, the bijection $\dtau$ induces a $\Ul$-module isomorphism, and
$\cF_s$ has a $\Ul$-crystal, given by the same rule as the $\Ue$-crystal by swapping the roles of $e$ and $\ell$ and replacing $q$ by $p$.

\begin{thm}\label{thmlevelrank} Via level-rank duality,
\begin{enumerate}
 \item the $\Ue$-action and the $\Ul$-action on $\cF_s$ commute, and
\item the $\Ue$-crystal and the $\Ul$-crystal of $\cF_s$ commute.
\end{enumerate}
\end{thm}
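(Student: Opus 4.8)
The plan is to prove the module-level statement (1) first, using Uglov's construction of the wedge space, and then to derive the crystal statement (2) from it via the theory of canonical bases. For (1), I would go back to the realisation of $\cF_s$ as the projective limit ($k\ra\infty$) of the finite $q$-wedge spaces $\La^kV$, where $V$ is an affinisation of $\C^e\otimes\C^\ell$. On each $\La^kV$ the algebra $\Ue$ acts through the $\C^e$ tensor slot and $\Ul$ (with $p=-1/q$) through the $\C^\ell$ slot, and the two actions commute because the $R$-matrices defining them are supported on complementary tensor factors of $\C^e\otimes\C^\ell$ --- the quantum affine form of classical $(\mathfrak{gl}_e,\mathfrak{gl}_\ell)$ Schur--Weyl duality. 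Commutativity passes to the limit $\cF_s$. It then only remains to match coordinates: the bijection $\tau$ of Section \ref{mpwedge} is built precisely so that, under the re-indexing $\cF_s=\bigoplus_{|\bs|=s}\cF_\bs$ by charged $\ell$-partitions, the $\Ue$-action becomes the JMMO action of Theorem \ref{thmjmmo} (this is the content of \cite[\S4]{Uglov1999}); symmetrically, composing with the conjugation $\cF_s\to\cF_{-s}$ and with $\dtau$ transports the $\Ul$-action to the JMMO-type action with the roles of $e,\ell$ and of $q,p$ interchanged. This yields (1).

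For (2) the main tool I would use is Uglov's canonical bases. By Theorem \ref{thmcanonicalbases} the bases $\cG^{\pm}$ of $\cF_\bs$ are pinned down by bar-invariance and unitriangularity, and by \cite[\S4.4]{Uglov1999} they restrict to Kashiwara's lower and upper global bases on the highest weight $\Ue$-submodules; hence the $\Ue$-crystal of $\cF_s$ is recovered from $\cG^{\pm}$ by specialising $q$ to $0$ or $\infty$ and recording which basis vector is hit. Now the bar involution of Definition \ref{defibar} involves \emph{both} the $e$-datum $x$ and the $\ell$-datum $y$, and --- as in Uglov's analysis, where it is set up to be compatible with both quantum affine actions --- it is also the bar involution adapted to the $\Ul$-structure (with $q\to 0,\infty$ replaced by $p\to\infty,0$, which swaps the roles of $\cG^{+}$ and $\cG^{-}$). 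Consequently the same single pair of canonical bases computes the $\Ul$-crystal. Since by (1) the two module actions commute, the $\Ue$-Kashiwara operators preserve the $\Ul$-weight decomposition and permute each $\cG^{\flat}$ modulo $q$ within a $\Ul$-weight space; unwinding the definitions of the Kashiwara operators, this exhibits them as maps on the $\Ul$-crystal commuting with the $\Ul$-Kashiwara operators, and symmetrically --- which is (2).

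A second, more hands-on route to (2), closer to the spirit of Section \ref{heiscrystal}, is to describe both crystals explicitly on abaci: the $\Ue$-crystal operators act on the $\ell$-abacus by an elementary bead move governed by an $e$-residue reading word, the $\Ul$-crystal operators act on the associated $e$-abacus (read off via the stacking-of-translated-copies picture of Example \ref{exalevelrank}) by an elementary bead move governed by an $\ell$-residue reading word, and one checks directly that these two moves commute --- exactly the type of abacus bookkeeping already used in the proof of Proposition \ref{propheis2}. The main obstacle, in either approach, is precisely this commutation step for part (2): the canonical-basis route requires care with which of $\cG^{\pm}$ controls which crystal once $q$ is replaced by $p=-1/q$ and with the fact that ``commuting actions plus a common global basis'' does not \emph{formally} force the crystal operators to commute, so one must run the argument through the definition of the Kashiwara operators; the combinatorial route requires a genuine case analysis of how the two bead moves interact, especially when the beads they move are close together. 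Part (1), by contrast, is routine once Uglov's finite-wedge picture is available.
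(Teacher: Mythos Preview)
The paper's own proof is entirely by citation: part (1) is attributed to Uglov \cite[Proposition 4.6]{Uglov1999} for the untwisted duality, with the passage to the twisted version deferred to \cite[Theorem 3.9]{Gerber2016}, and part (2) is \cite[Theorem 4.8]{Gerber2016}. Your sketch for (1) --- commutation on finite wedges via the two tensor slots of $\C^e\otimes\C^\ell$, passage to the limit, then transport of structure through $\tau$, conjugation, and $\dtau$ --- is exactly Uglov's argument together with the adjustment the paper alludes to, so that part is fine and matches the cited source.

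For (2) you offer more than the paper records, and both routes are reasonable. The abacus route is the one closer in spirit to \cite{Gerber2016} and to the arguments elsewhere in this paper (indeed your pointer to Proposition \ref{propheis2} is apt). The canonical-basis route is attractive, but the difficulty you flag is real and your sketch does not yet close it: knowing that a single pair $\cG^{\pm}$ encodes both crystals, together with (1), does \emph{not} force the Kashiwara operators to commute. The sentence ``preserve the $\Ul$-weight decomposition and permute each $\cG^\flat$ modulo $q$'' is where the argument stalls --- preserving $\Ul$-weights is much weaker than intertwining the $\Ul$-crystal operators. What actually closes the gap is a structural input you have not invoked: Uglov's decomposition of $\cF_s$ as a $(\Ue,\Ul)$-bimodule into outer tensor products of irreducible highest weight modules (with the Heisenberg factor as multiplicity space), together with the compatibility of $\cG^{\pm}$ with this decomposition. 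Once the crystal factorises as a product, the $\Ue$-operators act on one factor and the $\Ul$-operators on the other, and commutation is immediate. Either add that ingredient to your first route, or commit to the abacus case analysis of your second.
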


\proof
The first point is essentially due to Uglov \cite[Proposition 4.6]{Uglov1999}, where he uses the non-twisted version of level-rank duality,
see \cite[Theorem 3.9]{Gerber2016} for the justification in the twisted case.
The second point is \cite[Theorem 4.8]{Gerber2016}.
\endproof

\textbf{Ackowledgments:} 
Many thanks to Emily Norton for pointing out an inconsistency in the first version of this paper and
for helpful conversations.

\bibliographystyle{plain}

\end{document}